\title[Carath\'eodory convergence and type problem]{
Carath\'eodory convergence and the conformal type problem}
\author[A. Eremenko]{Alexandre Eremenko}
\address{Mathematics Department, Purdue University,
West Lafayette, IN 47907, USA}
\email{eremenko@purdue.edu}
\author[S.~Merenkov]{Sergei Merenkov}
\address{Department of Mathematics, The City College of New York and CUNY Graduate Center, New York, NY 10031, USA}
\email{smerenkov@ccny.cuny.edu}
\thanks{S.~M.\ supported by NSF grant DMS-2247364.}
\newcommand\C{{\mathbb C}}
\newcommand\oC{\overline {\mathbb C}}
\newcommand\N{{\mathbb N}}
\newcommand\D{{\mathbb D}}
\newcommand\R{{\mathbb R}}
\renewcommand\:{\colon}
\newtheorem{theorem}{Theorem}[section]
\newtheorem{definition}[theorem]{Definition}
\newtheorem{proposition}[theorem]{Proposition}
\newtheorem{remark}{Remark}
\newtheorem{lemma}[theorem]{Lemma}
\theoremstyle{definition}
\begin{document}

\abstract{We study Carath\'eodory convergence for open, simply connected surfaces spread over the sphere and, in particular, provide examples demonstrating that in the Speiser class
	the conformal type can change when two singular values collide.
}
\endabstract

\maketitle

\section{Introduction}\label{S:Intro}
\noindent
Carath\'eodory Kernel Convergence is an important tool in the theory of univalent functions. It gives a geometric criterion for a sequence of normalized univalent functions in the unit disk to converge uniformly on compacta, and gives a description of the image of the disk under the limiting map. 
In this paper we adapt the notion of convergence in the sense of Carath\'eodory, introduced in C.~Carath\'eodory~\cite{Ca12}, see also L.~I.~Volkovyski\u\i~\cite{Vo48}, to the setting of  pointed surfaces spread over the sphere. Moreover, we establish results, Theorems~\ref{T:CC},~\ref{T:CC2}, that relate such convergence to convergence on compacta omitting certain exceptional sets. A result similar to the necessary part of Theorem~\ref{T:CC} for surfaces spread over the plane was proved by K.~Biswas and R.~Perez-Marco~\cite[Theorem~1.2]{BPM15}.  Another aim of this paper is to provide examples of sequences of open, simply connected surfaces spread over the sphere (in fact, over the plane) that have only finitely many singular values and  whose conformal type changes when two of the singular values collide; see Section~\ref{S:TC}. We also give an example of a sequence of entire functions in the plane with finitely many values, so that each function in the sequence has infinite order, while the limit has order one; see Section~\ref{S:CO}.   

\subsection{Surfaces spread over the sphere}\label{SS:SSS}
Classically, Riemann surfaces are thought of as surfaces associated to holomorphic or, more generally, meromorphic functions.
\begin{definition}
A surface spread over the sphere is a pair $(S,f)$, where $S$ is an open topological surface and $f\: S\to\oC$ is a continuous, open and discrete map, called a projection. Here, $\oC$ is the Riemann sphere.
\end{definition}
The surface $S$ can be endowed with the pull-back conformal structure, so that $f$ becomes holomorphic. In what follows, we do not distinguish two surfaces $(S_1,f_1)$ and $(S_2,f_2)$ if there exists a homeomorphism $h\: S_1\to S_2$ such that 
$$
f_1=f_2\circ h.
$$ 
The homeomorphism $h$ is conformal when $S_1$ and $S_2$ are endowed with the pull-back conformal structures.
If $S$ is an open,  simply connected surface, then, equipped with the pull-back conformal structure, it is equivalent to either the complex plane $\C$  or the unit disk $\D$ in $\C$. In the former case we call $(S,f)$ \emph{parabolic}, and in the latter \emph{hyperbolic}. For a survey on surfaces spread over the sphere and the type problem one can consult~\cite{Er21}. 

Near each point, a continuous, open and discrete map $f$ is homeomorphically equivalent to the map $z\mapsto z^d$, where $d\in\N$. More precisely, for each $p_0\in S$, there exists an open neighborhood $U$ of $p_0$ and two homeomorphisms $h_1, h_2$, such that $h_1\: U\to\D,\ h_2\:f(U)\to\D$, with $h_1(p_0)=0,\ h_2(f(p_0))=0$, and 
$$h_2\circ f\circ h_1^{-1}(z)=z^d,\quad z\in\D.
$$ 
The number $d$ is called the \emph{local degree} of $f$ at $p_0$. It does not depend on the choice of homeomorphisms $h_1$ and $h_2$. If $d>1$, $p_0$ is called the \emph{critical point} and $f(p_0)$ the \emph{critical value} of $f$. 
An element $a\in\oC$ is called an \emph{asymptotic value} of $f$ if there exists a path $\gamma\:[0,1)\to S$, called an \emph{asymptotic tract}, that leaves every compact set of $S$ as $t\to1$, and such that $$\lim_{t\to1}f(\gamma(t))=a.$$ A \emph{singular} value of $f$ is either a critical or an asymptotic value.

\begin{definition}
A surface spread over the sphere $(S,f)$ is said to belong to Speiser class $\mathcal S$, if the projection map $f$ has only finitely  many singular values.
\end{definition}

Examples of surfaces from Speiser class include $(\C,p)$, where $p$ is an arbitrary polynomial, $(\C,\exp)$, $(\C,\sin)$, $(\C,\cos)$, $(\C,\exp\circ\exp)$, $(\C,\wp)$, where $\wp$ is the Weierstrass $\wp$-function, $(\D,\lambda)$, where $\lambda$ is the modular function, etc. 
Surfaces from Speiser class have combinatorial descriptions in terms of labeled Speiser graphs as follows. Let $\beta$ be a \emph{base curve}, i.e., a curve in the sphere that contains all singular values of $f$. For example, when all singular values of $f$ are real or $\infty$, we can chose $\beta$ to be the extended real line. Then, $\beta$ divides the sphere $\oC$ into two topological hemispheres, which we can call an ``upper" and ``lower" hemispheres for convenience. We fix two points, one in each of the hemispheres, and denote the point in the upper hemisphere by $\times$ and the one in the lower hemisphere by $\circ$. If the number of singular values of $f$ is $k$, we look at the graph $G_\beta$ in $\oC$ with two vertices $\times$ and $\circ$ and $k$ edges, each edge connecting $\times$ to $\circ$ in such a way that it separates two adjacent singular values on $\beta$. The \emph{Speiser graph} $G_{f, \beta}$ of $(S,f)$ is then the preimage of $G_\beta$ under the map $f$. It is bipartite, homogeneous of degree $k$, and its faces, i.e., connected components of the complement in $S$, are labeled by the corresponding singular values of $f$. The labels appear in cyclic orders around each vertex of $G_{f,\beta}$, according to the order of the singular values on $\beta$ viewed from $\times$ or $\circ$, respectively. See Figures~\ref{fig:EE}, \ref{fig:EEE}, \ref{fig:E} for examples. Conversely, given such a labeled graph $G$ in the plane and a base curve $\beta$ that contains all the labels of the faces, one can reconstruct a surface $(S, f)$ by gluing together upper and lower hemispheres of $\oC\setminus\{\beta\}$ identifying them along the arcs of $\beta$ between adjacent labels according to the combinatorics of the graph $G$.      
 See~\cite{GO70} or~\cite{Ne70} for further details.    


\subsection{Carath\'eodory convergence}
We consider 
triples $T=(S,f,w)$, where $(S,f)$ is a surface spread over the sphere, and $w\in S$ a point which is not critical for $f$. 
We refer to these triples as \emph{pointed} surfaces spread over the sphere. Two pointed surfaces spread over the sphere $T_1=(S_1,f_1, w_1)$ and $T_2=(S_2, f_2, w_2)$ are \emph{equivalent}, denoted $T_1\sim T_2$, if there exists a homeomorphism $h\: S_1\to S_2$ such that $h(w_1)=w_2$ and $f_1=f_2\circ h$. Equivalence classes
are still called \emph{pointed surfaces spread over the sphere}.
In addition to the equivalence,
we define the order relation on surfaces spread over the sphere:
$$(S_1,f_1,w_1)\subset (S_2,f_2,w_2)$$
means that there is a continuous injective map $\phi\: S_1\to S_2$
such that 
\begin{equation}\label{1}
\phi(w_1)=w_2\quad\mbox{and}\quad f_1=f_2\circ\phi.
\end{equation}
The second equation in (\ref{1}) implies that $\phi$ is holomorphic.
It is easy to see
that $T_1\subset T_2$ and 
$T_2\subset T_1$ imply that there is a homeomorphism $h\: S_1\to S_2$
satisfying (\ref{1}) with $\phi=h$. In this case $T_1\sim T_2$.  

Each equivalence class contains a {\em normalized} triple
with
$S=D_R:=\{ z\in\C:|z|<R\}$ for some $R\in(0,+\infty]$, 
$w=0$, and $f^{\#}(0)=1$, where $f^{\#}$ is the 
spherical derivative,
$$f^{\#}=\frac{f'}{1+|f|^2}.$$
A triple $T$ is called maximal if $T\subset T_1$
implies that $T\sim T_1$. 
If $S=D_R$, the open disk of radius $R$ centered at the origin, the maximality means that $f$ has no meromorphic continuation
beyond $D_R$.
\vspace{.1in}

C.~Carath\'eodory~\cite{Ca12} and L.~I.~Volkovyski\u\i~\cite{Vo48} defined convergence of Riemann surfaces generalizing Carath\'eodory convergence for sequences of univalent functions. 
The following two definitions are adapted from~\cite{Vo48}. As in~\cite{Vo48}, in these definitions and below, we assume that if $(S_n,f_n,w_n),\ n\in\N$, is a sequence of pointed surfaces spread over the sphere, then there are $p\in\oC$ and  $r>0$ such that for every $n\in\N$ there exists a domain $W_n$ in $S_n$ containing $w_n$, such that $f_n(w_n)=p$ and $f_n\: W_n\to B(p,r)$ is a homeomorphism.
\noindent
\begin{definition}\label{D:Ker} 
A kernel of a sequence $(S_n,f_n,w_n),\ n\in\N$, of pointed surfaces spread over the sphere is a pointed surface $(S, f, w)$, such that:

\vspace{.2cm}
\noindent
1) There exists a discrete set $E\in S$ with $w\not\in E$,
and for every compact
$K\in S\backslash E$ such that $w\in K$ there exists  $N\in\N$
with the property that for each $n>N$ there exists
a continuous embedding $\phi_{K,n}:K\to S_n$
with $\phi_{K,n}(w)=w_n$ and  $f_n\circ\phi_{K,n}=f$.
The set $E$ is called an exceptional set.

\vspace{.2cm}
\noindent
2) The triple $(S,f, w)$ satisfying property 1) is maximal in the sense of the order relation defined above.  
\end{definition}

The necessity of having an exceptional set $E$ is demonstrated by the following examples. Let $S_n$ be the surfaces obtained by gluing two spheres with slits $[1,1+1/n]$, identifying the lower edge of the slit on one of the spheres with the upper edge of the other using the identity map. Each $S_n,\ n\in\N$, is a simply connected Riemann surface of genus 0 that can be endowed with an obvious projection $f_n$ onto the sphere. We further select one of the two points projecting to 0 as a marked point $w_n$, making $(S_n, f_n, w_n)$ into a sequence of pointed surfaces spread over the sphere. The kernel of such a sequence $(S_n, f_n, w_n)$ as $n\to\infty$ is the sphere with the identity map, and the exceptional set $E$  is $\{1\}$. We can also modify the above sequence $S_n$ to make each surface to be a topological plane by removing one of the points projecting to $\infty$. Another, analytic, example is the following. Let $S_n=\C$ and $f_n(z)=z^2+1/n$. Then $(\C, f_n, w_n),\ n\in\N$, where $w_n=\sqrt{1-1/n}$, has Carath\'eodory kernel $(\C,f, 1)$, with $f(z)=z^2$, and the exceptional set $E$ is $\{0\}$.

\begin{definition}
A sequence $(S_n, f_n, w_n),\ n\in\N$, converges to $(S, f, w)$ in the sense of Carath\'eodory if $(S, f, w)$ is the kernel of every subsequence of $(S_n, f_n, w_n)$.
\end{definition}

It is clear that this definition is compatible with the equivalence
relation on pointed surfaces spread over the sphere.
This notion of convergence is also a version of the one defined in~\cite{BPM15}, adapted to surfaces spread over the sphere rather than the plane. 
Indeed, the relation $f_n\circ\phi_{K,n}=f$ implies that the continuous embedding $\phi_{K,n}$ is, in fact, an isometric embedding when surfaces are endowed with the pull-back spherical metric $d$. Specifically, $d$ is a path metric on $S$ given in some local coordinate $z=\sigma(f(p))\in\C$ of every non-critical point $p_0\in S$ by $2|dz|/\left(1+|z|^2\right)$, where $\sigma$ is the stereographic projection. The metric space $(S,d)$ is not complete in the presence of asymptotic values of $f$.

\section{Properties of the Carath\'eodory kernel}\label{S:PCK}
\noindent
Simple examples contained in ~\cite{Vo48} show that not every sequence of pointed surfaces has a subsequence converging in the sense of Ca\-ra\-th\'e\-o\-do\-ry to the kernel of the whole sequence. One such example is obtained by choosing a countable dense collection $\{a_1, a_2,\dots\}$ of points on the unit circle and letting $S_n,\ n\in\N$, to be the plane with the radial slit from $a_n$ to $\infty$. We choose 0 as the marked point for each $S_n$ and the projection map $f_n$ is the identity for each $n$. The kernel of the whole sequence is the open unit disk. However, from any sequence of such surfaces one can select a subsequence $S_{n_k},\ k\in\N$, with the corresponding $a_{n_k},\ k\in\N$, converging to $a$. The kernel of such a subsequence is the plane with the closed radial ray emanating from $a$ removed, which is different from the unit disk. Therefore, such a sequence of pointed Riemann surfaces does not converge in the sense of Carath\'eodory. 

In~\cite{Tr52}, Yu.~Yu.~Trohim\v cuk gave elementary examples when kernels of sequences of surfaces may not be unique. One example is obtained as follows. Let each $S_{2k-1},\ k\in\N$, be the disk $\{|z-1|<2\}$, and each $S_{2k},\ k\in\N$, the two-sheeted disk over  $\{|z-1|<2\}$ with single branch point at $z=1$. For odd $n$, the map $f_n$ on $S_n$ is the identity, and for even $n$, it is the projection. We choose $w_n$ to be 0 if $n$ is odd and one of the 2 points projecting to 0 for even $n$. Then, for any Jordan arc $J$ connecting $z=1$ to its boundary and avoiding $\{|z|<1\}$, the surface  $\{|z-1|<2\}\setminus J$ with the identity map and $w=0$ is a kernel of the whole sequence $S_n,\ n\in\N$. Such a sequence does not converge in the sense of Carath\'eodory either. 

In the same paper, Trohim\v cuk gave the following characterization for  uniqueness of a kernel. Assume that $(S_n, f_n, w_n),\ n\in\N$, is a sequence of pointed surfaces spread over the sphere as above, i.e.,  
 there exist $p\in\oC$ and $r>0$ such that for each $n\in\N$ there exists a domain $W_n$ in $S_n$ containing $w_n$, such that $f_n\: W_n\to B(p,r)$ is a homeomorphism, $f_n(w_n)=p$.
We say that a parametrized  curve $\gamma$ in $\oC$ with the initial point  $p$ is \emph{admissible} for $(S_n, f_n, w_n)$ if there exists a chain of disks $B(p_i, r_i),\ i=1,2,\dots,k$, in $\oC$ covering $\gamma$ with $p_1=p$ and $p_{i+1}\in\gamma\cap B(p_i, r_i),\ i=1,2,\dots, k-1$, corresponding to the increasing sequence of the parameter,  such that the following holds:
for each $i=1,2,\dots, k$, there exist $N_i\in\N$, and for all $n>N_i$ a domain  $W_{n,i}\subset S_n$,
$\ W_{n,1}=W_n$, 
with $f_n\: W_{n,i}\to B(p_i,r_i),\ n>N_i$, being a homeomorphism, and there exist  $p_{n,i}\in W_{n,i-1}\cap W_{n,i}$ with $f_n(p_{n,i})=p_i,\ i=2, 3,\dots, k$. Since there are only finitely many disks in the above definition, there exists $N_\gamma\in\N$ such that $W_{\gamma,n}=\cup_{i=1}^k W_{n,i}\subset S_n$ for all $n> N_\gamma$ and $W_{\gamma,n}$ is a domain, i.e., an open connected set. Each map $f_n\: W_{\gamma,n}\to\cup_{i=1}^k B(p_i, r_i),\ n>N_\gamma$, is a covering, but not necessarily a homeomorphism. 
An admissible parametrized  curve $\gamma$ is called \emph{normal} if there exists a finite collection of disks $B(p_i, r_i),\ i=1,2,\dots, k$, covering  $\gamma$ as above and $N\in\N$, such that for all $m,n>N$ there exists a homeomorphism $\phi_{\gamma,m,n}\: W_{\gamma, m}\to W_{\gamma,n}$ with $f_n\circ \phi_{\gamma,m,n}= f_m$ on $W_{\gamma,m}$.  

\begin{theorem}\cite[Theorem~2]{Tr52}\label{T:Troh}
For $(S_n,f_n,w_n),\ n\in\N$, to have a unique kernel it is necessary and sufficient that every admissible  curve is normal.
\end{theorem}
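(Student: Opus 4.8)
\emph{Plan of proof.} The idea is to regard a kernel of $(S_n,f_n,w_n)$ as the object obtained from the base germ by ``continuation along admissible curves'', and to detect uniqueness from whether that continuation is unambiguous. Let $\mathcal K$ denote the set of all pointed surfaces spread over the sphere that satisfy property~1) of Definition~\ref{D:Ker} for the given sequence, so that kernels are exactly the maximal elements of $(\mathcal K,\subset)$. The backbone of both implications is a dictionary I would set up first: if $(S,f,w)\in\mathcal K$ has discrete exceptional set $E$ and $\delta$ is an arc in $S$ from $w$ to a point $q$, then, after a perturbation making $\delta$ avoid both $E$ and the critical set of $f$ (each discrete), a closed tubular neighbourhood $K\subset S\setminus E$ of $\delta$ with $w\in K$ embeds in $S_n$ for all large $n$ by some $\phi_{K,n}$ with $f_n\circ\phi_{K,n}=f$; hence $\gamma:=f\circ\delta$ is admissible for $(S_n,f_n,w_n)$ (a chain supplied by small disks along $\delta$ over which $f$ is a homeomorphism), one has $W_{\gamma,n}\supset\phi_{K,n}(K)$, and the germ of $f$ at $q$ coincides, for all large $n$, with the terminal germ of the continuation of $\gamma$ in $S_n$. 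Moreover $S$ is recovered, up to $\sim$, by gluing the tubular neighbourhoods of all such arcs along the initial sub-arcs on which they carry equal terminal germs. Two consequences I would record: \textbf{(C1)} every admissible curve realized by an arc inside some member of $\mathcal K$ is normal, with forced terminal germ; \textbf{(C2)} a member of $\mathcal K$ is determined up to $\sim$ by the family of pairs (admissible curve realized in it, terminal germ), together with the coincidences among such arcs.

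\textbf{Sufficiency.} Assume every admissible curve is normal. Then for a normal admissible curve $\gamma$ the domains $W_{\gamma,n}$ are, for all large $n$, pairwise homeomorphic over their common projection and compatibly with the base point over $p$, so $\gamma$ carries a terminal germ $g(\gamma)$ that is independent of $n\gg1$. I would assemble a surface $\widehat S$ out of the tubular neighbourhoods of all normal admissible curves endowed with the germs $g(\gamma)$, gluing the models of $\gamma_1$ and $\gamma_2$ along the largest initial sub-arcs on which $g(\gamma_1)=g(\gamma_2)$, and equip it with the tautological projection $\widehat f$ and base point $\widehat w$. Since each $W_{\gamma,n}$ embeds in $S_n$ for $n\gg1$, one checks that $(\widehat S,\widehat f,\widehat w)\in\mathcal K$, the exceptional set being the (discrete) set of germs at which continuation branches, and that it is maximal: a strictly larger member of $\mathcal K$ would contain an arc whose projection is admissible, hence normal, hence already realized in $\widehat S$ with the same germ. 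Finally, given any kernel $(S',f',w')$, the dictionary yields a well-defined injection $S'\to\widehat S$ commuting with projections and base points, that is, $(S',f',w')\subset(\widehat S,\widehat f,\widehat w)$; maximality of $(S',f',w')$ then forces $(S',f',w')\sim(\widehat S,\widehat f,\widehat w)$. Hence the kernel is unique.

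\textbf{Necessity.} I would argue the contrapositive. Suppose $\gamma$ is admissible but not normal, and fix an admissible chain $B(p_i,r_i)$, $i=1,\dots,k$, for it. Since $W_{\gamma,n}$ is glued from at most $k$ homeomorphic copies of the $B(p_i,r_i)$, it realizes only finitely many homeomorphism types over $\bigcup_i B(p_i,r_i)$, so non-normality forces at least two of them to recur along infinite sets of indices. Examining the sub-arcs of $\gamma$ then produces a parameter $t^\ast$ such that $\gamma|_{[0,t]}$ is normal for every $t<t^\ast$ — so it has an $n$-stable canonical continuation $V\in\mathcal K$ — while the continuation of $V$ reaches a point over $\gamma(t^\ast)$ with a germ taking at least two values $g_{\mathcal A}\neq g_{\mathcal B}$ infinitely often. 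By (C1), no member of $\mathcal K$ contains an arc continuing $V$ over $\gamma(t^\ast)$; yet $V$ is not maximal, since it admits $n$-stable enlargements transverse to $\gamma$ near $\gamma(t^\ast)$, room for them being available in every $S_n$ by the admissibility of $\gamma$. The crux is that the maximal members of $\mathcal K$ extending $V$ are not unique: the recurrence of two distinct germs at $\gamma(t^\ast)$ prevents $V$ from being completed ``through'' $\gamma(t^\ast)$ without prescribing a side of approach, so Zorn's lemma yields at least two maximal extensions $K_{\mathcal A}$ and $K_{\mathcal B}$ of $V$ approaching $\gamma(t^\ast)$ from incompatible sides, which by (C2) are inequivalent. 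Hence the kernel is not unique.

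\textbf{Main obstacle.} The substantive steps are the dictionary of the first paragraph and the construction of the two completions of $V$ in the necessity direction; everything else is formal. Two features demand care. First, the exceptional set $E$ is only known to be discrete, with no control over its location, so arcs and their tubular neighbourhoods must be taken in general position with respect to $E$, and a genuinely discrete exceptional set has to be manufactured for $\widehat S$. Second, curves do not lift uniquely through the $S_n$, so ``the'' terminal germ of an admissible curve is well defined only once normality has been invoked; keeping the base points coherent through the homeomorphisms $\phi_{\gamma,m,n}$, verifying that the gluings defining $\widehat S$ are consistent, and isolating the two inequivalent maximal extensions of $V$ is where the real work lies.
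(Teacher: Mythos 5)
First, a point of reference: the paper does not prove this statement at all --- it is quoted as Trohim\v cuk's \cite[Theorem~2]{Tr52} and used as a black box in the proof of Proposition~\ref{P:UK}. So there is no in-paper argument to compare yours against, and your proposal has to stand on its own. Its overall strategy --- rebuild any kernel by continuation along admissible curves and read off uniqueness from whether that continuation is unambiguous --- is the right one, and the ``dictionary'' of your first paragraph together with the sufficiency half is essentially sound in outline.

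There are, however, two genuine gaps. The first is that you index continuations by their \emph{terminal germs}, and this invariant is too coarse for exactly the examples the theorem is designed to handle. In Trohim\v cuk's own example (Section~\ref{S:PCK}: disks alternating with two-sheeted disks branched at $1$, all projecting by the identity), a loop $\gamma$ around $1$ based at $p=0$ is admissible but not normal, yet the terminal germ of every lift is the \emph{same} identity germ at $0$ for all $n$; what distinguishes odd from even $n$ is whether the endpoint of the lift is identified with $w_n$ inside $W_{\gamma,n}$. Consequently your gluing rule for $\widehat S$ (``glue along the largest initial sub-arcs on which $g(\gamma_1)=g(\gamma_2)$'') would wrongly collapse the two sheets of a double cover, the injection $S'\to\widehat S$ is not well defined as described, and in the necessity direction the dichotomy ``$g_{\mathcal A}\neq g_{\mathcal B}$'' can fail even though $\gamma$ is not normal. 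The identification criterion must be phrased in terms of the full coverings $f_n\co W_{\gamma,n}\to\bigcup_i B(p_i,r_i)$ (i.e., whether the lifts of $\gamma_1$ and $\gamma_2$ starting at $w_n$ have the same endpoint for all large $n$, equivalently normality of a concatenated curve), not in terms of pointwise germs. The second gap is the heart of necessity: you assert that Zorn's lemma produces two maximal extensions of $V$ ``approaching $\gamma(t^\ast)$ from incompatible sides'' and that these are inequivalent, but nothing in the sketch rules out an abstract homeomorphism commuting with the projections between the two maximal elements, nor verifies that the maximal elements obtained actually admit a \emph{discrete} exceptional set (a hypothesis of Definition~\ref{D:Ker} that Zorn's lemma does not preserve for free along chains, and which you also assert without argument for $\widehat S$). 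In the model example the non-uniqueness is realized by cuts $J$ from the branch-limit point to the boundary, and inequivalence of $\{|z-1|<2\}\setminus J_1$ and $\{|z-1|<2\}\setminus J_2$ is proved by noting that $f_1=f_2\circ h$ forces $h=\id$; your argument needs an analogous explicit rigidity step, which is precisely the part you defer to ``where the real work lies.''
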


We use this theorem to show uniqueness of kernel in the Speiser class.

\begin{proposition}\label{P:UK}
If $(S_n, f_n, w_n),\ n\in\N$, is a sequence of surfaces having a surface of Speiser class $\mathcal S$ as its kernel, then the kernel 
is unique, i.e., two kernels are equivalent in the above sense. 
\end{proposition}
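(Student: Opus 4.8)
The plan is to establish that, under the hypothesis, every admissible curve is normal, and then conclude by Trohim\v cuk's criterion, Theorem~\ref{T:Troh}. So I would fix a kernel $(S,f,w)$ of the sequence that lies in $\mathcal S$, let $\Sigma$ be its finite set of singular values and $E\subset S$ the associated exceptional set, and fix an admissible curve $\gamma$ with a chain of disks $B(p_i,r_i)$, $i=1,\dots,k$; put $U=\bigcup_{i=1}^k B(p_i,r_i)$ and $p=p_1$. For $n>N_\gamma$ one has the covering $f_n\colon W_{\gamma,n}\to U$, where $W_{\gamma,n}=\bigcup_{i=1}^k W_{n,i}$ is a connected union of $k$ topological disks, each mapped homeomorphically by $f_n$ onto some $B(p_i,r_i)$, with $w_n\in W_{\gamma,n}$ and $f_n(w_n)=p$. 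The goal is to produce, for all large $m,n$, a homeomorphism $W_{\gamma,m}\to W_{\gamma,n}$ commuting with the projections.

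I would encode each $W_{\gamma,n}$ by the subgroup $H_n=(f_n)_*\pi_1(W_{\gamma,n},w_n)$ of $\pi_1(U,p)$: a connected covering of $U$ with a marked point over $p$ is determined up to homeomorphism over $U$ by this subgroup, so $H_m=H_n$ gives the required homeomorphism. Three finiteness observations then reduce the problem to a single point. First, $\pi_1(U,p)$ is finitely generated, since $U$ is a finite union of round disks in $\oC$ and hence of finite topological type. Second, $f_n\colon W_{\gamma,n}\to U$ has at most $k$ sheets, because a fiber meets each of the $k$ disks $W_{n,i}$ in at most one point, so $[\pi_1(U,p):H_n]\le k$. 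Third, a finitely generated group has only finitely many subgroups of index at most $k$. Hence $\{H_n:n>N_\gamma\}$ is a finite set of subgroups, and it suffices to prove that for each fixed $g\in\pi_1(U,p)$ the assertion ``$g\in H_n$'' is eventually constant in $n$; finitely many such $g$ separate the finitely many subgroups that can occur, so this forces $H_n$ itself to stabilize.

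I would then prove the following claim, which is where the Speiser-class hypothesis is used: if $\alpha$ is a loop at $p$ representing $g$, then for all large $n$ the lift of $\alpha$ to $W_{\gamma,n}$ from $w_n$ is a loop if and only if $\alpha$ lifts to a loop in $S$ starting at $w$, a condition internal to the kernel. Since $\Sigma$ is finite one may, after a harmless change of base point, assume $p\notin\Sigma$ and homotope $\alpha$ inside $U$ off the finite set $\Sigma$; then $f\colon S\setminus f^{-1}(\Sigma)\to\oC\setminus\Sigma$ is a covering map --- the structural property that characterizes the Speiser class --- so $\alpha$ lifts to a unique path $\tilde\alpha$ in $S$ from $w$, and whether $\tilde\alpha$ closes up is a fixed fact. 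As $\tilde\alpha$ lies over $\oC\setminus\Sigma$, its interior meetings with the discrete set $E$ occur at non-critical points, around which one can detour in $S$ at the cost of a small homotopy of $\alpha$ in $U$; so one may enclose $\tilde\alpha$ in a compact connected $K\subset S\setminus E$ containing $w$ (if $\tilde\alpha(1)\in E$ one encloses a terminal truncation instead and controls the remaining short arc using that the kernel embeddings are isometries for the pull-back spherical metric). By the defining property of the kernel there is, for large $n$, an embedding $\phi_{K,n}\colon K\to S_n$ with $\phi_{K,n}(w)=w_n$ and $f_n\circ\phi_{K,n}=f$; then $\phi_{K,n}\circ\tilde\alpha$ is a lift of $\alpha$ in $S_n$ from $w_n$ that lies in $W_{\gamma,n}$, and by uniqueness of path lifting in the covering $f_n\colon W_{\gamma,n}\to U$ it is precisely the lift testing ``$g\in H_n$''; hence it is a loop exactly when $\tilde\alpha$ is. Granting the claim, $H_n$ stabilizes and Theorem~\ref{T:Troh} finishes the proof.

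The step I expect to be the main obstacle is the one just glossed over: showing that $\phi_{K,n}$ genuinely carries the relevant lift in the kernel into $W_{\gamma,n}$ --- equivalently, that the disk chain of an admissible $\gamma$ lifts along $w$ to a chain of \emph{homeomorphic} lifts in the kernel forming a covering of $U$ equivalent to $W_{\gamma,n}$ for all large $n$, and that this lifted chain avoids $E$. The essential input is that the disks $B(p_i,r_i)$ of an admissible chain admit homeomorphic lifts in \emph{every} $S_n$ with $n$ large, and one must argue that this rigidity cannot degenerate in the limit --- no new critical value or omitted value can appear over $U$ --- precisely because $f$ has only finitely many singular values. This is the mechanism that fails in Trohim\v cuk's examples of non-unique kernels, where the limiting surface acquires a whole arc of asymptotic values and thus leaves $\mathcal S$; making this ``no collapse over $U$'' phenomenon precise is the real content of the argument.
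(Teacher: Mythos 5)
Your proposal follows the paper's overall strategy---reduce to Trohim\v cuk's criterion, Theorem~\ref{T:Troh}, and verify that every admissible curve is normal---but the way you verify normality is genuinely different. The paper argues directly on the surfaces: it first shows that the lift $\tilde\gamma$ of $\gamma$ from $w$ in the kernel contains no critical points of $f$ (otherwise a small circle around the critical point, together with the initial arc of $\tilde\gamma$, would embed into $S_n$ by the kernel property, contradicting the injectivity of $f_n$ along the admissible disk chain); it then builds the lifted disk chain $W_\gamma$ inside $S$ itself, deletes small disks around the points of the exceptional set $E$ to obtain a compact subset of $S\setminus E$, embeds it into each $S_n$ by maps $\phi_{\gamma,\delta,n}$ satisfying $f_n\circ\phi_{\gamma,\delta,n}=f$, restores the punctures by removability, and finally takes $\phi_{\gamma,m,n}=\phi_{\gamma,n}\circ\phi_{\gamma,m}^{-1}$. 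You instead classify the coverings $f_n\colon W_{\gamma,n}\to U$ by their characteristic subgroups $H_n\le\pi_1(U,p)$, note that only finitely many subgroups of index at most $k$ can occur, and reduce normality to finitely many loop-lifting tests, each answered inside the kernel via the covering $f\colon S\setminus f^{-1}(\Sigma)\to\oC\setminus\Sigma$. Both arguments pivot on the same mechanism---transporting data from the kernel into $S_n$ through the kernel embeddings---but yours trades the paper's explicit construction of the conjugating homeomorphisms for covering-space rigidity; the paper's route is shorter, while yours isolates more explicitly where the Speiser hypothesis enters (finiteness of $\Sigma$ and the covering property off $\Sigma$).

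On the step you flag as the main obstacle: it closes far more easily than you suggest, and no ``no collapse over $U$ in the limit'' argument is needed. Let $\beta_2$ be the lift of $\alpha$ in the covering $W_{\gamma,n}\to U$ starting at $w_n$, and let $\beta_1=\phi_{K,n}\circ\tilde\alpha$. Both are lifts of $\alpha$ under $f_n$, viewed as a map on all of $S_n$, with the same initial point, and $\beta_2$ avoids the critical points of $f_n$ because each $f_n\colon W_{n,i}\to B(p_i,r_i)$ is injective. Hence the coincidence set $\{t\colon \beta_1(t)=\beta_2(t)\}$ is nonempty, closed, and open (at a common point that is non-critical for $f_n$ the map $f_n$ is locally injective), so $\beta_1=\beta_2$; in particular $\phi_{K,n}\circ\tilde\alpha$ automatically lies in $W_{\gamma,n}$ and is exactly the lift testing $g\in H_n$. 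The remaining loose end---the endpoint of $\tilde\alpha$ possibly landing in $E$---is handled more cleanly by choosing the auxiliary basepoint $p'$ in $B(p,r)\setminus\left(\Sigma\cup f(E)\right)$, which is possible since $E$ is discrete, hence countable, hence $f(E)$ is countable; then no endpoint of a lift of a loop at $p'$ lies in $E$ and your truncation device becomes unnecessary. With these two repairs your argument is complete and constitutes a valid alternative proof.
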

\begin{proof}
Let $(S,f,w)$ be a kernel for $(S_n, f_n, w_n),\ n\in\N$, where $(S,f)\in\mathcal S$.
Let $\gamma\in\oC$ be an arbitrary admissible curve,
and let $\tilde\gamma$ be a
lift under $f$ of a maximal subcurve of $\gamma$ such that $\tilde\gamma$ has initial point $w$ and does not contain critical points of $f$ other than possibly at the other end point. We claim that $f(\tilde\gamma)=\gamma$, i.e., that the full lift of $\gamma$ does not contain any critical points of $f$. Indeed, if the terminal point of $\tilde\gamma$ is a critical point of $f$, look at a small circle $C$ (in the pull-back of the spherical metric) centered at it. Let $K$ be the compact set that is the union of the subcurve of $\tilde\gamma$ from $w$ to the first intersection of $\tilde\gamma$ with $C$ and $C$. If $K$ contains points of an exceptional set $E$ in the definition of kernel, we perturb it slightly such that the resulting compact set, still denoted $K$, does not intersect $E$. Thus, there exists $N\in\N$ such that for all $n\ge N$ we have a continuous embedding $\phi_{K,n}$ from $K$ into $S_n$ such that $\phi_{K,n}(w)=w_n$ and  $$f_n\circ\phi_{K,n}=f$$ on $K$. This is a contradiction since there is $N_\gamma\in\N$ such that $f_n,\ n> N_\gamma$, do not have critical points along the lift of $\gamma$ starting at $w_n$ because $\gamma$ is admissible, and so the left-hand side of the last equation is one-to-one on $C$ but the right-hand side is not. 

Now we cover $\gamma$ by disks $B(p_i,r_i),\ i=1,2,\dots, k$,  as in the definition of admissible curves. By passing to smaller disks if necessary, we can  assume that the closure of $W_\gamma$ defined for $f$ in the same way as $W_{n,\gamma}$ for $f_n$ above, does not contain critical points. If $W_\gamma$ contains other elements of $E$, we remove small open disks of radii $\delta$ around such points and denote the resulting set by $W_{\gamma,\delta}$.  The closure  $\overline{W_{\gamma,\delta}}$ being compact in $S\setminus E$ implies that there exists $N\in\N$ such that for all $n>N$ there is a continuous embedding $\phi_{\gamma,\delta, n}$ of  $\overline{W_{\gamma,\delta}}$ into $S_n$ with $f_n\circ \phi_{\gamma,\delta, n}=f$. These equations show that $\phi_{\gamma,\delta, n}$ has a holomorphic extension to an embedding of punctured neighborhoods of points in $E\cap \overline{W_\gamma}$, and removability gives an extension to an embedding of all of  $\overline{W_\gamma}$.
Therefore, if $m,n>N$, then we can choose $\phi_{\gamma,m,n}=\phi_{\gamma, n}\circ \phi_{\gamma, m}^{-1}$, and so $\gamma$ is normal and the proof is complete.    
\end{proof}

\begin{remark}
A consequence of Proposition~\ref{P:UK} is that if a sequence of surfaces $(S_n, f_n, w_n),\ n\in\N$, converges in the sense of Carath\'eodory to a surface in class $\mathcal S$, then its limit is unique. A similar statement and its proof are contained in~\cite[Proposition~3.2]{BPM15}. 
\end{remark}

\section{Uniform Convergence on Compacta}
\noindent
Notice that if all triples $(D_R,f,0),(D_{R_n},f_n,0),\ n\in\N$, are normalized, $(D_R,f,0)$ is a kernel of $(D_{R_n},f_n,0),\ n\in\N$, 
and if $K$ is a compact in $D_R$ containing $0$ in its interior, then $\phi_{K,n}'(0)=1$ for all $n>N$, where $\phi_{K,n}$ is from Definition~\ref{D:Ker}.

If $(S,f,w)$ is a simply connected surface spread over the sphere, its \emph{conformal radius} is the unique $R\le+\infty$ such that there exists a conformal map $F\: S\to D_R(0)$ with $F(w)=0,\ F'(w)=1$. The normalization makes $R$ well defined, and we call $F$ the \emph{normalized uniformizing map} of $(S,f,w)$. The following theorem is an extension of a result from~\cite{BPM15} to the spherical metric case. 

\noindent
\begin{theorem}\label{T:CC} 
Let $R\ge+\infty$ and $(D_{R_n},f_n,0),\ n\in\N$, be a sequence of normalized pointed open simply connected surfaces spread over the sphere satisfying $\limsup R_n\le R$. The triple $(D_{R_n},f_n,0)$ converges to a normalized triple $(D_R,f,0)$
in the sense of Carath\'eodory if and only if there exists an exceptional set $E$ in $D_R$ such that for
$(D_{R_n},f_n,0)$ one has:
\vspace{.1in}

\noindent
a) $f_n(0)=f(0)$ for all $n\in\N$, 
\vspace{.1in}

\noindent
b)  $\lim_{n\to\infty}R_n=R$,  and
\vspace{.1in}

\noindent
c) $f_n$ converge to $f$ uniformly on every compact set in $D_R\backslash E$.
\end{theorem}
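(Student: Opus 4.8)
The plan is to prove the two implications separately. In both directions the main engine is a normal families argument applied to the ``lifting'' maps $\phi_{K,n}$ of Definition~\ref{D:Ker}, combined with the monotonicity of the conformal radius under inclusion (the Schwarz lemma); the sufficiency direction needs in addition an analytic continuation construction of these $\phi_{K,n}$.

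\medskip\noindent\emph{Necessity.} Assume Carath\'eodory convergence. Condition a) is immediate: applying property 1) of Definition~\ref{D:Ker} to a small compact neighborhood of $0$ gives $f(0)=f_n(\phi_{K,n}(0))=f_n(0)$. For b) and c), fix $\rho<R$, apply property 1) to the set obtained from $\overline{D_\rho}$ by deleting small disks around the finitely many points of $E\cap\overline{D_\rho}$, and use removability exactly as in the proof of Proposition~\ref{P:UK} to obtain conformal embeddings $\phi_n\: D_\rho\to D_{R_n}$ with $\phi_n(0)=0$ and $\phi_n'(0)=1$ (the normalization $\phi_{K,n}'(0)=1$ being the observation recorded just before Theorem~\ref{T:CC}), defined for all large $n$. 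The family $\{\phi_n\}$ is normal (uniformly bounded when $R<\infty$ since $\limsup R_n\le R$, locally bounded when $R=\infty$ by the Koebe distortion theorem). Diagonalizing over $\rho\uparrow R$, along any subsequence we may pass to a further subsequence with $\phi_n\to\phi$ locally uniformly on $D_R$, where $\phi$ is univalent, $\phi(0)=0$, $\phi'(0)=1$; thus $\phi^{-1}$ is the normalized uniformizing map of $\Omega:=\phi(D_R)$, so $\Omega$ has conformal radius $R$ at $0$. If $R_n\to R'$ along this subsequence, then $\phi(\overline{D_\rho})\sub\overline{D_{R'}}$ for every $\rho<R$, hence $\Omega\sub D_{R'}$, and monotonicity of the conformal radius forces $R\le R'$. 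With $\limsup R_n\le R$ this yields $\lim R_n=R$, which is b). Then $\Omega\sub D_R$ has the same conformal radius as $D_R$, so $\Omega=D_R$ and $\phi=\id$. As this holds along every subsequence, $\phi_n\to\id$ locally uniformly on $D_R$; since $f_n=f\circ\phi_n^{-1}$ on $\phi_n(D_\rho)$ and $\phi_n^{-1}\to\id$, we get $f_n\to f$ locally uniformly on $D_R$, in particular c) holds.

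\medskip\noindent\emph{Sufficiency.} Assume a), b), c). Since all three conditions are inherited by subsequences, it suffices to show that $(D_R,f,0)$ satisfies property 1) of Definition~\ref{D:Ker} with respect to the whole sequence for some discrete $E'\not\ni 0$ and is maximal; both then hold for every subsequence, which is Carath\'eodory convergence. Set $E'=E\cup\Sigma$, where $\Sigma$ is the discrete set of critical points $q$ of $f$ in $D_R\setminus E$ such that, for infinitely many $n$, the preimage $f_n^{-1}(f(q))$ near $q$ consists of more than one point. Near $0$ let $\phi_n$ be the branch of $f_n^{-1}\circ f$ with $\phi_n(0)=0$ (well defined for large $n$ by a) and c)), and continue it analytically. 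For compact connected $K\sub D_R\setminus E'$ with $0\in K$, the claim is that for all large $n$ this continuation is defined, single-valued, and uniformly close to $\id$ on $K$; then $\phi_{K,n}:=\phi_n|_K$ is the required embedding, and $\phi_n(K)\sub D_{R_n}$ for large $n$ by b). The claim follows by a connectedness argument: the set of points of $K$ to which $\phi_n$ extends single-valuedly with $|\phi_n-\id|$ small is open, contains $0$, and is closed in $K$, because (i) off $E'$ the continuation meets no obstruction; (ii) across a critical point $q\in K$ of $f$ the local degree of $f$ at $q$ equals that of $f_n$ at the nearby preimage of $f(q)$ — a single point since $q\notin\Sigma$ — so $\phi_n$ extends; (iii) around any point of $E'$ the monodromy is trivial since $\phi_n$ is already close to $\id$ on a small circle about it; and escape to $\partial D_{R_n}$ is excluded because $|\phi_n-\id|$ is small and $R_n\to R$ by b).

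\medskip\noindent For maximality, suppose $(D_R,f,0)\sub(D_{R_1},g,0)$ with the latter normalized and also satisfying property 1) for the sequence. Running the normal families argument of the necessity part on the embeddings of $(D_{R_1},g,0)$ into the triples $(D_{R_n},f_n,0)$ and using b) (so their images accumulate in $\overline{D_R}$), one finds that $D_{R_1}$ has conformal radius at most $R=\mathrm{conf.rad}(D_R)$, i.e.\ $R_1\le R$; with $R\le R_1$ from the inclusion, $R_1=R$ and the two triples are equivalent. Hence $(D_R,f,0)$ is maximal.

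\medskip\noindent The main obstacle is the sufficiency direction, and specifically the control of the analytic continuation of $\phi_n$: one must pin down $E'$ so that it captures exactly the critical points of $f$ near which the sheets of $f_n$ genuinely separate (the phenomenon behind the example $f_n(z)=z^2+1/n$, for which $E'=\{0\}$), prove this set discrete, and then verify that the continuation neither runs into $\partial D_{R_n}$ — this is precisely where hypothesis b) is used — nor acquires nontrivial monodromy around points of $E'$, the latter being available only because the same connectedness argument keeps $\phi_n$ uniformly near the identity. The necessity direction is comparatively routine once the normalization $\phi_{K,n}'(0)=1$ and the removable-singularity extension across $E$ are in place.
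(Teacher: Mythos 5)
Your proposal is correct and follows essentially the same route as the paper: necessity via a normal-families/Koebe argument applied to the normalized embeddings $\phi_{K,n}$ (with removability across $E$ and the Schwarz/Liouville step identifying the limit as the identity), and sufficiency by constructing $\phi_{K,n}=f_n^{-1}\circ f$ on compacta avoiding $E$ and the critical points, using Rouch\'e/Hurwitz to control branching. You are somewhat more explicit than the paper about the monodromy of the continuation, the derivation of b) from conformal-radius monotonicity, and the maximality check, but these are elaborations of the same argument rather than a different approach.
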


To prove this theorem, we need the following lemma.
\begin{lemma}\label{L:UB} 
{Let $D$ be a region in the plane, containing $0$.
For a positive constant $C$, 
let $\mathcal F_C$ be the family of all univalent holomorphic functions in $D$ satisfying
$f(0)=0,\ |f'(0)|\leq C$. Then $\mathcal F_C$ is uniformly bounded on each compact
subset of $D$.}
\end{lemma}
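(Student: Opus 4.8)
The plan is to derive the lemma from the Koebe growth and distortion theorems for the normalized univalent class $S$ on the unit disk, transporting the built-in bound $|f'(0)|\le C$ to an arbitrary point of a prescribed compact set by a chaining argument.

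First I would record the local estimate on a disk. If $g$ is univalent on $B(a,R)\subset\C$, then $g'(a)\neq0$ (a univalent holomorphic function has nowhere-vanishing derivative), so $w\mapsto\bigl(g(a+Rw)-g(a)\bigr)/(Rg'(a))$ lies in $S$, and the growth and distortion theorems give absolute constants $A,B$ with
\[
|g(z)-g(a)|\le A R\,|g'(a)|\quad\text{and}\quad|g'(z)|\le B\,|g'(a)|\qquad\text{for }|z-a|\le R/2
\]
(one may take $A=2$, $B=12$). In particular, for $f\in\mathcal F_C$ this already controls $f$ and $f'$ on a fixed disk around $0$.

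Next I would fix a compact $K\subset D$ and, since $D$ is a region (hence path connected), find by a routine compactness argument a connected compact set $L$ with $K\cup\{0\}\subset L\subset D$ --- for instance, finitely many arcs from $0$ to points of $K$ together with small closed disks covering $K$. It then suffices to bound $\sup_{f\in\mathcal F_C}\max_{L}|f|$. The case $D=\C$ is trivial, since then every univalent entire function is affine; so assume $D\neq\C$ and set $r=\tfrac14\operatorname{dist}(L,\partial D)>0$. By compactness and connectedness of $L$ there is an integer $M$ such that every $z\in L$ is joined to $0$ by a chain $0=a_0,\dots,a_m=z$ with $m\le M$, all $a_j\in L$, and $|a_{j+1}-a_j|\le r$; in particular $\overline{B(a_j,2r)}\subset D$. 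Applying the local estimate on $B(a_j,2r)$ (radius $2r$, displacement $\le r$) gives $|f'(a_{j+1})|\le B|f'(a_j)|$ and $|f(a_{j+1})-f(a_j)|\le 2Ar|f'(a_j)|$, whence inductively $|f'(a_j)|\le B^{j}C\le B^{M}C$ and, using $f(0)=0$,
\[
|f(z)|\le\sum_{j=0}^{m-1}|f(a_{j+1})-f(a_j)|\le 2ArC\sum_{j=0}^{M-1}B^{j},
\]
a bound depending only on $K$ and $C$, not on $f\in\mathcal F_C$ or $z\in K$.

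The one step that is not purely mechanical is the middle one --- building the connected compact ``corridor'' $L$ from the origin to $K$ and extracting a uniform bound $M$ on the number of chain links --- and this is where I would expect to spend the most care, although it is still a standard connectedness-plus-compactness argument; the rest is the Koebe distortion theorem plus bookkeeping. It is worth noting that univalence is indispensable here: on $\D$ the functions $z\mapsto n z^{2}$ satisfy $f(0)=0$ and $f'(0)=0\le C$ yet are unbounded near $\partial\D$, so no estimate of this kind can hold without the injectivity hypothesis.
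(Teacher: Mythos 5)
Your proof is correct and follows essentially the same route as the paper's: the Koebe growth and distortion theorems give the bound on a disk, and a chain of disks from $0$ to an arbitrary point of the compact set propagates the bound on $|f'|$ and hence on $|f|$. You merely make explicit the uniformity of the chain length over the compact set, which the paper leaves implicit.
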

\begin{proof} The Koebe Distortion Theorem guarantees the conclusion for the case that $D$ is a disk centered at $0$. Moreover, in this case the derivatives of $f\in \mathcal F_C$ are also uniformly bounded on compacta. 
To prove it for general $D$, we consider an arbitrary
point $z_0\in D$. Then there is a finite sequence of disks
$B_k,\; 0\leq k\leq n$, all contained in $D$ and such that
$B_0$ is centered at $0$, $B_n$ contains $z_0$,
and for every $k=1,2,\dots,n$, the center
of $B_k$ belongs to $B_{k-1}$. Applying the above result for disks
successively to $B_0,B_1,\ldots,B_n$, we obtain the statement of the lemma.
\end{proof}

\vspace{.1in}
\noindent
{\em Proof of  Theorem~\ref{T:CC}.}
We start with the sufficiency.
Conditions a), b), and c) are satisfied for any subsequence of $(D_{R_n}, f_n, 0),\ n\in\N$, so, to simplify notations, we may assume that the whole sequence is such a subsequence.
 We choose $E'$ to be the union of $E$ and all the critical points of $f$. This is a discrete subset of $(D_R,f,0)$. Let $K$ be an arbitrary compact in $D_R\setminus E'$. By making it bigger, we can always assume that it has the form $K=\{z\: |z|\le R_0\}\setminus E'_\delta$, where $E'_\delta$ is an open $\delta$-neighborhood of $E'$. 
Let $N$ be chosen so large that each $D_{R_n},\ n>N$, contains $K$, each $f_n,\ n>N$, has no critical points in $K$, and, if $z\in E'$ is a critical point of $f$ of multiplicity $m$, then each $f_n,\ n>N$, has total multiplicity of critical points in the $\delta$-neighborhood of $z$ equal $m$. The last condition is guaranteed by an application of Rouch\'e's Theorem. We can now choose $\phi_{K,n}=f_n^{-1}\circ f$, where the inverse branch of $f_n$ is chosen so that $\phi_{K,n}(0)=0$. The above conditions on $N$ guarantee that each $\phi_{K,n},\ n>N$, is one-to-one analytic in a neighborhood of $K$. The maximality of $(D_R, f,0)$ is trivial.

The proof of necessity is similar to that of~\cite[Theorems~1.1, 1.2]{BPM15}. 
Part a) follows from the definition of Carath\'eodory convergence. To prove parts b) and c),
it is enough to show that each subsequence of $(D_{R_n}, f_n, 0)$ has a further subsequence that satisfies b), and c). To simplify notations, we again assume that $(D_{R_n}, f_n, 0)$ is already a subsequence.
Let $E$ be the exceptional set from the definition of Carath\'eodory convergence, which we may assume contains all the critical points of $f$. 
For each compact $K$ in $D_R\setminus E$ and $n$ large enough, let $\phi_{K, n}\: K\to D_{R_n}$ be a continuous embedding such that $f_n\circ\phi_{K,n}=f$. Note that the normalization of $f$ and $f_n$ imply that $\phi_{K,n}(0)=0,\ \phi_{K,n}'(0)=1$. 
Exhausting $D_R\setminus E$ by compacta $K_j,\ j\in\N$, we obtain a sequence  $\phi_{K_j,n_j},\ j\in\N$, of univalent holomorphic maps in the interiors of the respective compacta, whose domains contain a fixed neighborhood of 0 and exhaust $D_R\setminus E$. Also, they are normalized by $\phi_{K_j,n_j}(0)=0,\ \phi_{K_j,n_j}'(0)=1$. Lemma~\ref{L:UB}  implies that, given any compact set $K$ in $D_R\setminus E$, a subsequence of $\phi_{K_j,n_j},\ j\in\N$, is uniformly bounded on $K$. Therefore, using a diagonalization argument we obtain that a subsequence of $\phi_{K_j,n_j},\ j\in\N$, converges uniformly on compacta to a conformal map $\phi$ in $D_R\setminus E$. The assumption that $\limsup R_n\le R$ implies that the image of $D_R\setminus E$ under $\phi$ is contained in $\overline{D_R}$.  Since $E$ is discrete, it is removable for $\phi$ and we continue to denote the continuous extension of $\phi$ to $E$ by $\phi$. Note that $\phi$ satisfies $\phi(0)=0,\ \phi'(0)=1$. If $R=\infty$, Liouville's Theorem implies that $\phi(\C)=\C$ and the normalization gives that $\phi$ is the identity. If $R<+\infty$, the Schwarz Lemma gives that $\phi$ is the identity. 
\qed

\vspace{.1in}

The following example demonstrates the difficulty of establishing uniform convergence without the assumption $\limsup R_n\le R$, e.g.,  in the case of parabolic surfaces converging to a hyperbolic one. 

Let $D_1$ and $D_2$ be two distinct simply connected domains containing 0,
and let the Riemann maps $g_j$ of $D_j,\ j=1,2$, onto the unit disk
be normalized by $g_j(0)=0$, and $g_j'(0)=1,\ j=1,2$. In addition, we assume that $g_1, g_2$ have analytic extensions to the plane as entire functions.
Let $f$ be an analytic function in the unit disk which has no analytic continuation to a bigger domain, $f(0)=0,\ f'(0)=1$,
and let $f_n$ be its Taylor partial sums. 
For example, we can take $f$ to be the normalized conformal map of the unit disk onto a domain bounded by von Koch snowflake. 
Consider now the sequence of entire functions  $h_n$ given by $h_{2k-1}=f_k\circ g_1$ and $h_{2k}=f_k\circ g_2,\ k=1, 2,\dots$.
The Carath\'eodory kernel for this sequence will be the Riemann surface of $f$, which is hyperbolic.
The sequence is normalized, consists of entire functions in the whole plane, but no limit of functions $h_n,\ n\in\N$, exists since $D_1\neq D_2$.
Based on this example, we do not expect that a general uniform  convergence statement can be proved for a sequence of parabolic surfaces converging to a hyperbolic one in the sense of Carath\'eodory.
However, if we pass to subsurfaces, we have the following result.

\begin{theorem}\label{T:CC2} 
Let $(S_n, f_n, w_n),\ n\in\N$, be as in Figure~\ref{fig:EE}, where we choose
$a=a_n\to\infty$. Then there exists a
sequence $(S_n', f_n, w_n),\ n\in\N$, where $S_n'\subset S_n$ is open and simply connected, that has the following properties: 
 $(S_n', f_n, w_n),\ n\in\N$, has the same kernel as $(S_n, f_n, w_n),\ n\in\N$, which is $(S, f, w)$  as in Figure~\ref{fig:EEE},
and
for the normalized uniformizing maps $F_n:
S_n'\to D_{R_n}$, we have $\lim R_n=R<+\infty$, where $R$ is the conformal radius of $(S, f, w)$, and $F_n,\ n\in\N$, converge uniformly on compacta  to $F\: S\to D_R$, the normalized uniformizing map of $(S, f, w)$.
\end{theorem}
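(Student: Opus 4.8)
The plan is to define $S_n'$ as the image in $S_n$ of an increasingly large topological disk of the kernel $(S,f,w)$, carried over by the embeddings that Carath\'eodory convergence supplies, so that $(S_n',f_n,w_n)$ becomes, up to equivalence, that disk itself; the assertions of the theorem then reduce to the classical Carath\'eodory kernel theorem for a nested exhaustion of $S$ (equivalently, to Theorem~\ref{T:CC} read through uniformizing coordinates). Since $(S,f,w)$ is hyperbolic, fix a nested exhaustion $L_1\subset L_2\subset\cdots$ of $S$ by closed topological disks with $w\in\mathring L_1$, $\bigcup_m\mathring L_m=S$, and $L_m$ disjoint from the exceptional set $E$ of the kernel (this last point is the delicate one; see below). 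By hypothesis $(S,f,w)$ is the kernel of $(S_n,f_n,w_n)$, so for each $m$ there is $N(L_m)$ such that for $n>N(L_m)$ there is an embedding $\phi_{L_m,n}\co L_m\to S_n$ with $\phi_{L_m,n}(w)=w_n$ and $f_n\circ\phi_{L_m,n}=f$; such embeddings are unique and compatible as $m$ varies (reading $f_n$ locally as $z\mapsto z^d$, the set where two of them agree is open and closed in a connected domain, hence everything). Put $j(n)=\max\{m\co N(L_m)\le n\}$, so $j(n)\to\infty$, and define $S_n'=\phi_{L_{j(n)},n}(\mathring L_{j(n)})$ (with $S_n'$ taken arbitrarily, say a small disk about $w_n$, for the finitely many $n$ for which $j(n)$ is undefined). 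Then $S_n'\subset S_n$ is open and simply connected, $\phi_{L_{j(n)},n}$ restricts to a homeomorphism $\mathring L_{j(n)}\to S_n'$ intertwining $f$ with $f_n|_{S_n'}$ and sending $w$ to $w_n$, and hence $(S_n',f_n,w_n)\sim(\mathring L_{j(n)},f,w)$.

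Granting this, the theorem follows. The domains $\mathring L_{j(n)}$ form a nested increasing family of simply connected domains with union $S$, so $(\mathring L_{j(n)},f,w)$ has $(S,f,w)$ as the kernel of every subsequence: property~1) of Definition~\ref{D:Ker} holds with inclusions and empty exceptional set, and maximality holds because a strictly larger surface with property~1) would, by the same uniqueness argument together with removability of discrete singularities (valid since $S$ is hyperbolic, so any holomorphic map into $S$ is bounded), embed into $S$ compatibly with projections, contradicting the antisymmetry of the order relation. Thus $(S_n',f_n,w_n)$ converges to $(S,f,w)$ in the sense of Carath\'eodory, and in particular has the same kernel as $(S_n,f_n,w_n)$. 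Next, under the equivalence above the normalized uniformizing map $F_n$ of $(S_n',f_n,w_n)$ is $G_{j(n)}\circ\phi_{L_{j(n)},n}^{-1}$, where $G_m\co\mathring L_m\to D_{\rho_m}$ is the normalized uniformizer of $\mathring L_m$ (derivative $1$ at $w$ measured in the $f$-coordinate). By the Carath\'eodory kernel theorem for $\mathring L_m\nearrow S$ (or by Theorem~\ref{T:CC}), $\rho_m\nearrow R$ and $G_m\to F$ locally uniformly on $S$; hence $R_n=\rho_{j(n)}\to R<+\infty$, and for every compact $K\subset S$ we have $\phi_{K,n}=\phi_{L_{j(n)},n}|_K$ once $n$ is large, so $F_n\circ\phi_{K,n}=G_{j(n)}|_K\to F|_K$ uniformly on $K$, which is precisely the claimed convergence $F_n\to F$ on compacta.

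The step that uses the explicit content of Figures~\ref{fig:EE} and~\ref{fig:EEE}, and which I expect to be the main obstacle, is the requirement that the exceptional set $E$ of the kernel be empty. If $E\ne\varnothing$, then the disks $L_m$ cannot simultaneously avoid $E$ and exhaust $S$, forcing the $\mathring L_m$ to be slit domains whose conformal radii do not reach $R$; moreover the holomorphic extensions of the $\phi_{L_m,n}$ across $E$ may become branched — as in the example $f_n(z)=z^2+1/n$, where $\phi_{K,n}(z)=\sqrt{z^2-1/n}$ has a critical point that approaches the exceptional point $0$ — so that $S_n'$ as defined above would no longer be equivalent to $\mathring L_{j(n)}$. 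One would then have to enlarge $S_n'$ near the exceptional points, reading off from the Speiser graph of Figure~\ref{fig:EE} precisely how $f_n$ lies over a neighbourhood of the colliding value $a_n$ and attaching the correct local sheets, and re-verify that the common kernel is still $(S,f,w)$ and that $R_n\to R$. In the situation of Figure~\ref{fig:EE}, however, one expects $E=\varnothing$, precisely because $a_n\to\infty$ is a collision at the ideal boundary: the asymptotic tracts (and any critical points) of $f_n$ associated with the value $a_n$ have images escaping to $\infty$, hence recede to the ideal boundary of $S_n$ and leave no bad point inside the kernel $S$. Confirming this emptiness from the figures is the one thing requiring genuine care, and it is what makes the clean argument of the first two paragraphs go through.
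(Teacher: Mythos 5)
Your overall strategy is sound and is, at its core, the same as the paper's: realize an increasing exhaustion of $S$ as subsurfaces $S_n'\subset S_n$, observe that each $(S_n',f_n,w_n)$ is equivalent to a nested simply connected subdomain of $S$ whose union is $S$, and then reduce the statements about $R_n$ and $F_n$ to the Schwarz lemma and the classical Carath\'eodory kernel theorem (i.e., the argument of Theorem~\ref{T:CC} read in the uniformizing coordinate). The concrete construction differs: you take $S_n'$ to be the image of an abstract exhaustion disk $\mathring L_{j(n)}$ under the kernel embeddings, whereas the paper defines $S_n'$ explicitly as the connected component of $w_n$ in $S_n$ minus the preimages of the real segment between $a_n$ and $\infty$ in the relevant part of Figure~\ref{fig:EE}. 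The paper's choice is combinatorially canonical and makes the monotonicity $S_n'\subset S_{n+1}'\subset S$ (hence $R_n\le R_{n+1}\le R$) visible directly from the Speiser graphs; your choice buys monotonicity by fiat from the nesting of the $L_m$. Either works once the embeddings exist.

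The one point you flag but do not prove --- that the exceptional set $E$ of the kernel is empty, equivalently that \emph{every} compact $K\subset S$ embeds into $S_n$ for large $n$ --- is indeed the load-bearing step, and your heuristic (``collision at the ideal boundary'') is correct but is not a proof. This is exactly what Theorem~\ref{T:PtoHtoP} supplies: its proof constructs, for arbitrary compact $K\subset S_b$, an embedding of $V_{b,\delta}=f_b^{-1}\bigl(\oC\setminus\overline D(\infty,\delta)\bigr)\supset K$ into $S_{a_n,b}$ as soon as $a_n\in B(\infty,\delta)$, with no exceptional set; the paper's proof of the present theorem also records the short direct argument that $f(K)$ omits a neighborhood of $\infty$, so for large $n$ the arc from $a_n$ to $\infty$ misses $f(K)$ and the isometric embedding of Speiser graphs restricts to an embedding of $K$ into $S_n'$. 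With that citation your argument closes; without it, the construction of the $L_m$ and of $j(n)$ is not yet justified. The remaining steps (uniqueness and compatibility of the $\phi_{L_m,n}$ by the identity principle, $\rho_m\nearrow R$, $F_n\circ\phi_{K,n}=G_{j(n)}|_K\to F|_K$) are correct as written.
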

\begin{proof}
By Theorem~\ref{T:PtoHtoP} below, the sequence $(S_n, f_n, w_n),\ n\in\N$, converges to $(S, f, w)$ in the sense of Carath\'eodory. The surface $(S, f)$ is hyperbolic by Lemma~\ref{L:h}.  	
For simplicity we assume that  the sequence $a_n$  monotonically approaches
$\infty$. 
There is a unique isometric (in the graph metric, where we identify multiple edges connecting any pair of vertices) embedding of the graph
in Figure~\ref{fig:EEE} into the graph in Figure~\ref{fig:EE} satisfying the following properties. The embedding extends to an orientation preserving homeomorphism of the plane, it takes the vertical linear subgraph that is the boundary of the face labeled 1 to the subgraph with the same properties, and it takes the top most horizontal face labeled $b$ to the face with the same properties.
Let $(S_n, f_n, w_n)$ be the
pointed sequence corresponding to Figure~\ref{fig:EE} with $w_n$ being the point
that corresponds to $w$ under the above isometric embedding of graphs.
Note that for each compact $K$ in $S$, there is an isometric embedding of
$K$ into $S_n$ for all $n$ large enough. Indeed, if $K$ is a compact in $S$,  it follows immediately that its projection to $\oC$ under $f$ cannot contain $\infty$. Thus, for all $n$ large enough, $a_n$ will be in the same connected component of $\oC\setminus f(K)$, and the claim follows.


Now, let  $S_n'$ be the connected component of the surface
obtained from $S_n$ by cutting out all the preimages of the extended
real line (this is our base curve) between $a_n$ and $\infty$ in the
first quadrant of Figure~\ref{fig:EE}, i.e., the part bounded by the vertical linear subgraph that is the boundary of the face labeled 1 and above the top most face labeled $b$, and that contains $w_n$. Each $(S_n',
f_n, w_n)$ is still a simply connected surface spread over $\oC$, even $\C$.  (It is,
however, not a log-Riemann surface in the sense of~\cite{BPM15} because its
completion is not obtained by adding a discrete set of points to $S$.)
Note that each $S_n',\ n=1,2,\dots$, is a subset of $S_{n+1}'$  because we
assume monotonicity of $a_n$, and also a subset of $S$. 
These facts along with the Schwarz Lemma imply that for
each $n,\ R_n\leq R_{n+1}\leq R$. In particular, $\lim R_n$ exists.
From the definition of $S_n'$ and the above claim on embedding every compact $K$ in $S$ into $S_n,\ n>N$, for some $N\in\N$, it follows that each such $K$ embeds into $S_n'$ for all $n$ large enough. Indeed, the compact $f(K)$ does not contain the segment between $a_n$ and $\infty$ for all large $n$. In particular, $(S_n',f_n, w_n),\ n\in\N$, has the same Carath\'eodory kernel $(S,f,w)$ as $(S_n,f_n, w_n),\ n\in\N$, and $\lim
R_n=R$. 
The proof of uniform convergence of $F_n$ to $F$ on compacta now follows the same lines as the proof of the necessity part of Theorem~\ref{T:CC}; see also~\cite[Theorem~1.1]{BPM15}.  
\end{proof}

\section{Examples}\label{S:TC}

\noindent
In this section we provide examples of Carath\'eodory convergence of parabolic surfaces to a hyperbolic one and vice versa, when one singular value approaches another. The following theorem facilitates the understanding of what happens to Speiser graphs under such convergence. 
\begin{theorem}\label{T:RE}
Let $(S_n, f_n)$ be a sequences of surfaces spread over the sphere from Speiser class $\mathcal S$ such that they all have isometric labeled Speiser graphs with singular values written in a cyclic order with respect to the same base curve as  $a_1, a_2, \dots, a_{k-1}$, $a_k(n)$, and $a_k(n)\to a_1,\ n\to\infty$. Assume further that $(S_n, f_n, w_n),\ n\in\N$, converges in the sense of Carath\'eodory to a surface $(S, f, w)$ as $n\to\infty$ in such a way that $w_n$ belongs to the closure of a hemisphere that corresponds to the same vertex under the isometry of Speiser graphs and $f_n(w_n)=f(w)\neq a_1$. Then  the surface $(S, f)$ belongs to the Speiser class $\mathcal S$ and its singular values are $a_1, a_2, \dots, a_{k-1}$. Moreover, its Speiser graph is the connected component of the graph  obtained from the common Speiser graph of the surfaces $(S_n, f_n)$ by removing the edges between $a_k(n)$ and $a_1$, and the connected component contains a vertex whose corresponding hemisphere contains $w$ in its closure.
\end{theorem}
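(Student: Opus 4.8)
The plan is to pass from the abstract Carath\'eodory convergence hypothesis to the combinatorial picture by exhibiting large compact pieces of $(S,f,w)$ inside the common surface $(S_n,f_n,w_n)$ and reading off the Speiser graph from them. First I would set up the base curve: since $a_k(n)\to a_1$, I would fix a base curve $\beta$ for the common graph that passes through $a_1,\dots,a_{k-1}$ and, for $n$ large, also through $a_k(n)$ (the arc of $\beta$ between $a_1$ and $a_k(n)$ shrinks to the point $a_1$ as $n\to\infty$). This lets me speak of the same Speiser graph $G$ for all $(S_n,f_n)$ simultaneously, with $k$ edges, one of which — call it $e_n$, the one separating $a_1$ from $a_k(n)$ — is the edge we will delete. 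I would also fix the point $p=f(w)=f_n(w_n)\neq a_1$ and a small disk $B(p,r)$ not meeting $\beta$ (shrinking $r$ if necessary so that $B(p,r)$ avoids all of $\beta$), so that $w$ and $w_n$ sit inside univalently-mapped neighborhoods $W,W_n$ over $B(p,r)$.

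The core step is to identify, inside $S$, the ``collapsed'' portion of the Speiser graph and show it disappears in the limit. Concretely, I would argue that the exceptional set $E$ of Definition~\ref{D:Ker} can be taken to lie over $\{a_1\}$: away from $a_1$, the surfaces $(S_n,f_n)$ are, locally over $\oC\setminus\{a_1,\dots,a_{k-1}\}$, uniform of degree determined by the graph $G$ minus $e_n$, which is the fixed graph $G' := G\setminus\{e_n\}$ viewed combinatorially (the deletion of $e_n$ does not change the rest of the graph because $a_k(n)$ has merged into $a_1$). I would make this precise as follows: take any compact $K\subset S\setminus E$ containing $w$; by Definition~\ref{D:Ker} there is an embedding $\phi_{K,n}\colon K\to S_n$ with $f_n\circ\phi_{K,n}=f$ and $\phi_{K,n}(w)=w_n$, for $n$ large. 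Exhaust $S\setminus E$ by such $K$. Over each $K$, the map $f$ is a branched cover onto $f(K)\subset\oC$, and the branching/sheet structure of $f$ over $f(K)$ is forced to agree (via $\phi_{K,n}$) with that of $f_n$ over $f(K)$ for large $n$; since $f(K)$ can be taken to avoid a neighborhood of $a_1$, and since over such a set $f_n$ has the sheet structure dictated by $G'$ (the edge $e_n$ and the arc of $\beta$ from $a_1$ to $a_k(n)$ lie in the neighborhood of $a_1$ we excised), it follows that $(S,f)$ restricted over $\oC\setminus(\text{nbhd of }a_1)$ is uniform of degree $k-1$, branched exactly as $G'$ prescribes. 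In particular $f$ has no critical or asymptotic values in $\oC\setminus\{a_1,\dots,a_{k-1}\}$, so $(S,f)\in\mathcal S$ with singular values among $\{a_1,\dots,a_{k-1}\}$, and its Speiser graph (for the limiting base curve, which is $\beta$ with the $a_1$--$a_k(n)$ arc contracted) is a subgraph of $G'$; connectivity of $S$ and the condition that $w$ lies in the closure of the hemisphere corresponding to the marked vertex pins it down to be exactly the connected component of $G'$ containing that vertex.

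The main obstacle I anticipate is the bookkeeping around the exceptional set and the ``connected component'' subtlety. Deleting the edges $e_n$ from $G$ may disconnect the graph (this is precisely the phenomenon driving the type-change examples), and one must be careful that: (i) the embeddings $\phi_{K,n}$ only ever see the component of $G'$ containing the base vertex of $w$ — this is where the hypothesis $f_n(w_n)=f(w)\neq a_1$ and the ``same hemisphere'' condition are essential, since they anchor $w$ to one side; and (ii) the exceptional set $E$ genuinely reduces to points over $a_1$ and cannot ``hide'' extra singular values of $f$ elsewhere — here I would invoke the removability argument already used in the proof of Proposition~\ref{P:UK} (a discrete exceptional set is removable for the holomorphic embeddings $\phi_{K,n}$, hence for $f$), to conclude that no critical or asymptotic value of $f$ can be manufactured at an interior point of $E$. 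Once these two points are nailed down, reconstructing $(S,f)$ from its graph via the gluing procedure described in Section~\ref{SS:SSS} and matching it with the component of $G\setminus\{e_n\}$ is routine.
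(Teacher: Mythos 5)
Your proposal is correct and follows essentially the same route as the paper's (much terser) proof: the paper simply adds the full preimage of $a_1$ to the exceptional set $E$ and observes that for any compact $K\subset S\setminus E$ the images $\phi_{K,n}(K)$ eventually avoid every preimage of the arc of the base curve between $a_k(n)$ and $a_1$, which is exactly your mechanism of excising a neighborhood of $a_1$ and reading off the sheet structure from $G\setminus\{e_n\}$ anchored at the marked vertex. Your additional care about removability at exceptional points and about which component of the deleted graph the embeddings can reach fills in details the paper leaves implicit, but it is the same argument.
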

\begin{proof}
The proof follows immediately from the definition of Ca\-ra\-th\'e\-o\-do\-ry convergence since we can add the full preimage of $a_1$ to the exceptional set $E$. In this case, if $K$ is any compact subset of $S\setminus E$, there exists $N\in\N$, such that no $\phi_{K,n}(K),\ n>N$, from Definition~\ref{D:Ker} contains any preimage of the arc of the base curve between $a_k(n)$ and $a_1$.
\end{proof}


\begin{figure}[h]
  \includegraphics[height=8cm]{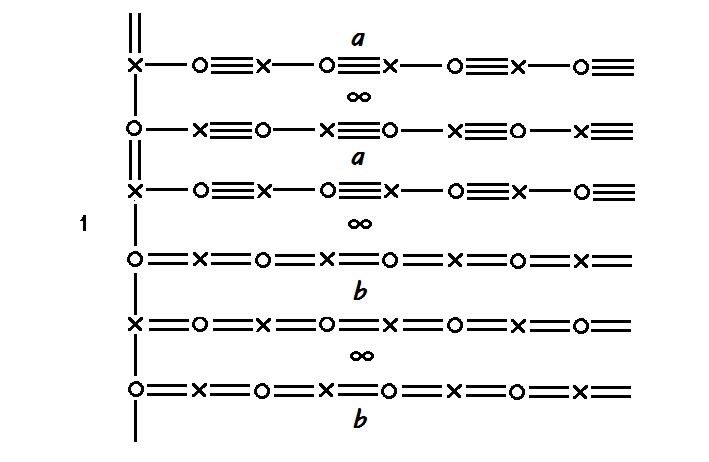}
  \caption{Double exponential, perturbed, $(S_{a,b}, f_{a,b})$.}
  \label{fig:EE}
 \end{figure}

\begin{figure}[h]
  \includegraphics[height=8cm]{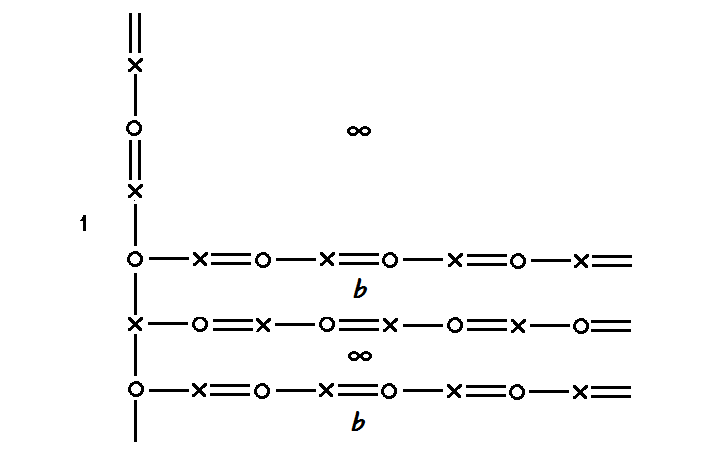}
  \caption{Hyperbolic surface, $(S_b, f_b)$.}
  \label{fig:EEE}
 \end{figure}
 
 \begin{figure}[h]
 	\includegraphics[height=8cm]{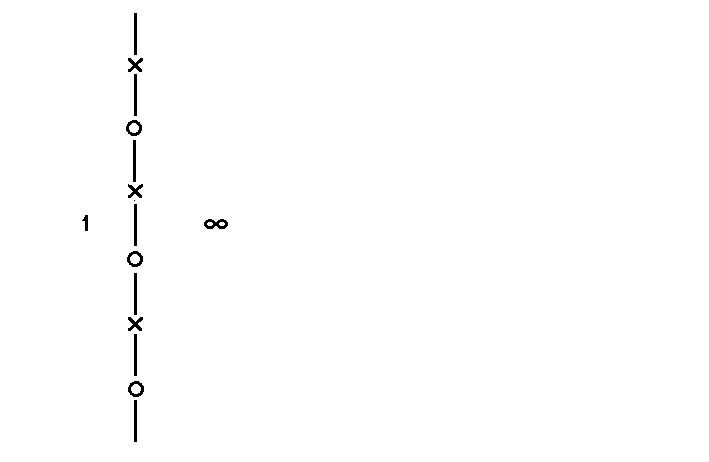}
 	\caption{Surface of the exponential function $f(z)=e^z+1$, $(S, f)$.}
 	\label{fig:E}
 \end{figure}
 
As an application, we obtain that the graph in Figure~\ref{fig:EEE}  is obtained from that in Figure~\ref{fig:EE} by sending $a=a_n$ to $\infty$, and the graph in Figure~\ref{fig:E} is obtained from the graph in Figure~\ref{fig:EEE} by sending $b=b_n$ to $\infty$.

 \begin{lemma}\label{L:h}
 The surface $(S_{a,b},f_{a,b})$ whose labeled Speiser graph is depicted in Figure~\ref{fig:EE} is parabolic for each $a, b\neq 1, \infty$.
 The surface $(S_b,f_b)$ with labeled Speiser graph from Figure~\ref{fig:EEE} is hyperbolic for each $b\neq 1,\infty$.
 The surface $(S,f)$ with labeled Speiser graph from Figure~\ref{fig:E} is parabolic.
 \end{lemma}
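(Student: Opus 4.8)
The plan is to establish each of the three type assertions by identifying the surface, or a suitable exhaustion of it, with a concrete Riemann surface whose type is known, and then to invoke the standard criteria (Koebe/Gr\"otzsch-type estimates, or direct uniformization).

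First I would treat the parabolicity of $(S,f)$ from Figure~\ref{fig:E}. The labeled Speiser graph there is that of the surface of $z\mapsto e^z+1$, which has a single logarithmic singularity over $\infty$ (plus the critical/omitted value inherited from the exponential). Concretely this surface is biholomorphic to $\C$ with projection $e^z+1$, since the exponential realizes exactly this combinatorics: infinitely many sheets arranged in the half-plane pattern dictated by the graph, glued along the base curve. Thus $(S,f)$ is parabolic essentially by definition, and I would just verify that the gluing prescription coming from the graph in Figure~\ref{fig:E} reproduces the logarithmic end structure of $(\C,e^z+1)$.

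Next, for $(S_{a,b},f_{a,b})$ in Figure~\ref{fig:EE}, the heuristic is that this is a doubly-perturbed double exponential, and $(\C,\exp\circ\exp)$ is parabolic; the perturbation splits one singular value into two nearby ones but does not change the asymptotics of the end, so parabolicity should persist. To prove parabolicity rigorously I would use the length-area method: construct an exhaustion of $S_{a,b}$ by the preimages $f_{a,b}^{-1}(\overline{D(0,r)})$ (in the spherical metric), estimate the extremal length of the family of curves separating $w$ from the ideal boundary, and show it diverges. The combinatorial data of the Speiser graph controls how many sheets lie over each annulus $\{r<|z|<2r\}$: for a double-exponential-type graph this count grows like a double exponential in the radius, and the corresponding Gr\"otzsch-type estimate (each sheet contributes a definite modulus, summed over annuli) forces the total modulus to infinity, i.e. parabolicity. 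This is the standard Teichm\"uller--Wittich--Belinski\u\i\ / line-complex circle-of-ideas, and since the graph is explicitly drawn, the sheet-counting is a finite bookkeeping task.

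Finally, the hyperbolicity of $(S_b,f_b)$ in Figure~\ref{fig:EEE} is the delicate case and I expect it to be the main obstacle. Here the combinatorics is ``between'' single exponential and double exponential: sending $b\to\infty$ collapses it to Figure~\ref{fig:E} (parabolic), while it arose from Figure~\ref{fig:EE} (parabolic) by sending $a\to\infty$, so the type genuinely changes and one must pin down why this particular graph gives a disk. I would again use extremal length, but now the goal is the reverse inequality: I want to exhibit a curve family from $w$ to the ideal boundary with \emph{finite} extremal length, equivalently a proper conformal-or-subharmonic test function with finite Dirichlet-type integral. The point is that in Figure~\ref{fig:EEE} the face labeled $b$ is an ``infinite-degree'' feature (a face with infinitely many boundary edges along one side), producing a half-plane-like end whose image under uniformization is a bounded region; one can build an explicit subsurface biholomorphic to a half-plane (or estimate the modulus of the complementary annuli and show the sum \emph{converges}) and conclude $R<\infty$. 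Concretely, I would bound $(S_b,f_b)$ from above by a model surface obtained by discarding all but finitely many sheets over the relevant annuli except along the $b$-face, uniformize that model explicitly, and use the Schwarz lemma / domain monotonicity (as in the proof of Theorem~\ref{T:CC2}) to transfer hyperbolicity. The crux is choosing the model so that it is both an honest sub- or super-surface of $(S_b,f_b)$ in the order relation of the paper and transparently hyperbolic; getting the monotonicity direction right, and ensuring the discarded/added sheets do not affect the conclusion, is where the real work lies.
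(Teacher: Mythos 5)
Your identification of $(S,f)$ with $(\C,e^z+1)$ is exactly the paper's argument for the third assertion, but for the other two your sketches have genuine gaps, and in both cases the paper takes a different, complete route. For the hyperbolicity of $(S_b,f_b)$ --- which you rightly single out as the crux --- the plan founders on the monotonicity direction you yourself flag: discarding sheets produces a \emph{sub}surface in the paper's order relation, and a hyperbolic subsurface says nothing about the ambient surface (compare $\D\subset\C$); to use the Schwarz lemma you would need a hyperbolic \emph{super}surface of $(S_b,f_b)$, and no candidate is proposed. Likewise, exhibiting a subsurface biholomorphic to a half-plane proves nothing, since the parabolic surface of $e^z+1$ contains such half-planes. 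The paper instead uses the cut-and-glue criterion of Volkovyski\u\i~\cite{Vo50}: a horizontal cut splits $S_b$ (for $b=0$) into an upper piece uniformized by a half-plane via $e^z+1$ and a lower piece uniformized by a half-plane via $\exp(\exp(z))$; the induced gluing homeomorphism of the boundary line is $x\mapsto\ln\left(\ln\left(e^x+1\right)\right)$, asymptotic to the identity at $-\infty$ and to $\ln x$ at $+\infty$, and Volkovyski\u\i's Theorem~24 then yields hyperbolicity. That theorem is precisely the quantitative input your sketch is missing.

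For the parabolicity of $(S_{a,b},f_{a,b})$, your length--area estimate points in a direction that does not come for free. In the naive serial--parallel bound, $n_j$ sheets over the $j$-th annulus give an extremal distance to the ideal boundary of at least $\sum_j c_j/n_j$, and doubly exponential growth of $n_j$ makes this sum \emph{converge}, so the ``finite bookkeeping'' you describe certifies nothing; proving parabolicity this way requires a carefully chosen metric, and crude sheet counting cannot decide the type (the modular surface also has infinitely many logarithmic singularities over three points, yet is hyperbolic). The paper sidesteps this entirely: for $a=b=0$ the surface is that of $\exp(\exp(z))$, hence parabolic, and for general $a,b$ one either applies a quasiconformal deformation (type is a quasiconformal invariant of open simply connected surfaces) or invokes the fact that for Speiser-class surfaces the type depends only on the Speiser graph and not on the labeling \cite{Do84,Me03}. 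So while your overall strategy (reduce to model surfaces of known type) matches the paper's in spirit, the two hard steps are carried out there by tools --- quasiconformal invariance of type and Volkovyski\u\i's gluing theorem --- that your proposal does not supply substitutes for.
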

 \begin{proof}
 For $a=b=0$, the surface with labeled Speiser graph in Figure~\ref{fig:EE} is the surface of the double exponential function $z\mapsto\exp(\exp(z))$, and therefore is parabolic. For arbitrary $a, b\neq 0, \infty$, it is obtained from the double exponential  using a quasiconformal deformation, and so it is parabolic as well. An alternative way to conclude parabolicity for an arbitrary $a, b\neq 0, \infty$, 
is to use the result that for a surface spread over the sphere with finitely many singular values the conformal type only depends on the Speiser graph and not the labeling; see~\cite{Do84, Me03}.
 
 To show hyperbolicity of the surface whose labeled Speiser graph is depicted in Figure~\ref{fig:EEE}, with $b=0$, we use cutting and gluing techniques of Vol\-ko\-vys\-ki\u\i~\cite{Vo50}; see~\cite{GM05} for a similar example. 
 Namely, we make a horizontal cut that separates the graph into two parts and such that the cut crosses the lowest vertical edge that has the property that there are no asymptotic tracts above the cut that have asymptotic value $b=0$. The part above the cut is uniformized by the upper half-plane by the exponential map adjusted so that the asymptotic values are 1 and $\infty$ rather than 0 and $\infty$, namely by the map $f(z)=e^z+1$; see Figure~\ref{fig:E} for the labeled Speiser graph of $(\C, f(z))$. The part below the cut is uniformized by the double exponential map $z\mapsto \exp(\exp(z))$. Its labeled Speiser graph is given in Figure~\ref{fig:EE} with $a=0$.
The gluing homeomorphism of the real line is therefore given by $x\mapsto \ln\left(\ln\left(e^x+1\right)\right)$. This map is asymptotic to the identity as $x\to-\infty$ and to $x\mapsto\ln x$ as $x\to+\infty$. Therefore, according to~\cite[Theorm~24, p.~89]{Vo50}, the surface is hyperbolic. 

For arbitrary $b\neq 1,\infty$, the corresponding surface is obtained from a quasiconformal deformation of the above hyperbolic surface, and hence is also hyperbolic.

The surface with labeled Speiser graph in Figure~\ref{fig:E} is that of $f(z)=e^z+1$, and so is parabolic.
 \end{proof}

\begin{theorem}\label{T:PtoHtoP} For $a, b\in\R$, let $(S_{a,b}, f_{a,b})$ be a surface spread over the sphere whose labeled Speiser graph is depicted in Figure~\ref{fig:EE}. Fix an arbitrary point $w$ in an open hemisphere represented by either $\circ$ or $\times$ in Figure~\ref{fig:EEE}. Then, 
for any real sequence $a_n\to+\infty$, there exists a corresponding sequence of points $w_n\in S_{a_n,b},\ b\in\R$, such that the sequence of parabolic surfaces $(S_{a_n,b}, f_{a_n,b}, w_n),\ n\in\N$, whose labeled Speiser graphs are as in Figure~\ref{fig:EE} with $a=a_n$, converges in the sense of Carath\'eodory to the hyperbolic surface $(S_b,f_b, w)$ with labeled Speiser graph as in Figure~\ref{fig:EEE}. 

Likewise, for any point $w$ in an open hemisphere represented by either $\circ$ or $\times$ in Figure~\ref{fig:E}, and any real sequence $b_n\to+\infty$, there exists a corresponding sequence of points $w_n\in S_{b_n}$, such that 
the sequence of hyperbolic surfaces $(S_{b_n}, f_{b_n}, w_n),\ n\in\N$, with labeled Speiser graphs as in Figure~\ref{fig:EEE} with $b=b_n$ converges in the sense of Carath\'eodory to the parabolic surface $(S, f, w)$ with labeled Speiser graph depicted in Figure~\ref{fig:E}. 
\end{theorem}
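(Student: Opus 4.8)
The plan is to verify directly from Definition~\ref{D:Ker} that $(S_b,f_b,w)$ is the kernel of \emph{every} subsequence of $(S_{a_n,b},f_{a_n,b},w_n)$. Note that Theorem~\ref{T:CC} is not available here: the parabolic surfaces $S_{a_n,b}$ all have conformal radius $+\infty$ while the limit is hyperbolic, so the conformal radii do not converge and one genuinely has to argue combinatorially. It suffices to treat the first assertion; the second is proved in exactly the same way, with the singular value $b$ escaping to $\infty$ and Figure~\ref{fig:E} playing the role of Figure~\ref{fig:EEE} — nowhere will the argument use that the limit is hyperbolic, only the combinatorial relation between the two Speiser graphs.

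The first step is the construction of $w_n$. On the base curve (the extended real line) the singular values of $f_{a_n,b}$ are $1,b,a_n,\infty$, and since $a_n\to+\infty$, for $n$ large the arc $A_n$ of the base curve running from $a_n$ to $\infty$ through the positive reals contains no other singular value, i.e.\ it is a ``gap''. Deleting from the Speiser graph of $(S_{a_n,b},f_{a_n,b})$ the family of edges separating $a_n$ from $\infty$ and passing to the connected component carrying the distinguished vertical subgraph (the boundary of the face labeled $1$) produces a labeled graph isometric to the Speiser graph of $(S_b,f_b)$ — this is precisely the combinatorial relation between Figures~\ref{fig:EE} and~\ref{fig:EEE}. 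Equivalently, if $S_n'$ denotes the connected component of $S_{a_n,b}\setminus f_{a_n,b}^{-1}(A_n)$ containing that distinguished boundary, then $S_n'$ is a simply connected surface spread over $\oC$ canonically identified, compatibly with the projections and with the normalizations built into the figures, with the sub-surface of $S_b$ lying over $\oC\setminus A_n$; write $\iota_n\colon S_n'\to S_b$ for this identification. Set $w_n:=\iota_n^{-1}(w)\in S_n'\subset S_{a_n,b}$ (for the finitely many remaining $n$, choose $w_n$ arbitrarily over $f_b(w)$); then $f_{a_n,b}(w_n)=f_b(w)\neq 1$ for all large $n$.

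Now verify Definition~\ref{D:Ker}. For part~1 take $E:=f_b^{-1}(\infty)$, the full preimage of the singular value that $a_n$ approaches; $E$ is discrete since $f_b$ is open and discrete, and $w\notin E$ because $f_b(w)$ is one of the points $\circ,\times$, hence not a singular value. Given a connected compact $K\subset S_b\setminus E$ with $w\in K$, the set $f_b(K)$ is compact and omits $\infty$, so for $n$ large it is disjoint from $A_n$, whence $K$ lies in the $w$-component of $S_b\setminus f_b^{-1}(A_n)$, i.e.\ in $\iota_n(S_n')$; then $\phi_{K,n}:=\iota_n^{-1}|_K\colon K\to S_n'\subset S_{a_n,b}$ is a continuous embedding with $\phi_{K,n}(w)=w_n$ and $f_{a_n,b}\circ\phi_{K,n}=f_b$. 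For part~2 (maximality) we show that any pointed surface $(T,g,v)$ satisfying part~1 for the sequence satisfies $(T,g,v)\subset(S_b,f_b,w)$; as $(S_b,f_b,w)$ itself satisfies part~1, it is then the largest such surface, hence the kernel, and the whole argument is term-by-term so it applies verbatim to each subsequence. Enlarge the exceptional set $E_T$ of $(T,g,v)$ to contain $g^{-1}(\infty)$ and the critical points of $g$ (still discrete), and exhaust $T\setminus E_T$ by connected compacta $K_1\subset K_2\subset\cdots$ with connected interior and $v\in K_1$. For fixed $j$ and $n$ large there is an embedding $\psi_{j,n}\colon K_j\to S_{a_n,b}$ with $\psi_{j,n}(v)=w_n$, $f_{a_n,b}\circ\psi_{j,n}=g$; since $g(K_j)$ is compact and omits $\infty$ it omits $A_n$ for $n$ large, so $\psi_{j,n}(K_j)$ is disjoint from $f_{a_n,b}^{-1}(A_n)$, hence (being connected and containing $w_n\in S_n'$) lies in $S_n'$, and $\iota_n\circ\psi_{j,n}\colon K_j\to S_b$ is an embedding intertwining $g$ and $f_b$ with $v\mapsto w$. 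Any two such maps are holomorphic on the connected set $K_j^{\circ}$ and agree near $v$ (unique lifting through the covering $f_b$ off the singular values), hence agree everywhere by the identity theorem; so $\iota_n\circ\psi_{j,n}$ is independent of $n$, call it $\psi_j$, and $\psi_{j'}|_{K_j}=\psi_j$. The $\psi_j$ glue to an injective holomorphic $\psi\colon T\setminus E_T\to S_b$ with $f_b\circ\psi=g$, $\psi(v)=w$; since $S_b$ is conformally a disk $D_R$ with $R<+\infty$, $\psi$ is bounded near each puncture in $E_T$, so it extends holomorphically across $E_T$, and the extension remains injective and still satisfies $f_b\circ\psi=g$ by continuity (the removability argument of Proposition~\ref{P:UK}). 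Thus $(T,g,v)\subset(S_b,f_b,w)$, proving maximality and hence Carath\'eodory convergence; the stated conformal types are Lemma~\ref{L:h}.

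The main obstacle is the combinatorial input used in the second step: making precise that for $n$ large the cut surface $S_n'$ is canonically and conformally isomorphic, compatibly with the projections and with the normalization fixing the face labeled $1$ and the topmost face labeled $b$, to the truncation of $S_b$ over $\oC\setminus A_n$ — that is, identifying the Speiser graph of $S_b$ with the relevant component of the Speiser graph of $S_{a_n,b}$ and upgrading that graph isomorphism to the maps $\iota_n$. This is essentially the surface-reconstruction picture from labeled Speiser graphs, but it must be set up carefully enough to track base points and projections simultaneously. Once the $\iota_n$ are in place, the verification of Definition~\ref{D:Ker} and the gluing of the $\psi_j$ are routine, and the second assertion of the theorem follows by the identical scheme.
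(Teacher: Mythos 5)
Your proof follows essentially the same route as the paper's: the base point $w_n$ is produced by the isometric embedding of Speiser graphs, and the embeddings $\phi_{K,n}$ come from observing that a compact set avoiding the fiber over $\infty$ projects away from the escaping arc from $a_n$ to $\infty$ for all large $n$, so the graph embedding transports it into $S_{a_n,b}$. The only substantive difference is that you spell out the maximality verification, which the paper dismisses as trivial; note only that your removability step there uses that $S_b$ is conformally a bounded disk, so the claimed ``verbatim'' transfer to the second assertion (where the limit is parabolic) must instead use injectivity of $\psi$ together with $f\circ\psi=g$ to rule out essential singularities and poles at the punctures of $E_T$.
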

\begin{proof}
There is an obvious isometric (in the graph metrics; multiple edges between two vertices being identified) embedding of the Speiser graph of $(S_b, f_b)$ into that of $(S_{a,b},f_{a,b})$ and the Speiser graph of  $(S, f)$ into the Speiser graph of $(S_b, f_b)$, for all $a\le b<1$. The embeddings are unique up to vertical shifts.  

For an arbitrary point $w\in S_b$ as in the first part of the statement, let $w_n\in S_{a_n,b}$ be the point that corresponds to $w$ under the isometric embedding of the Speiser graphs above.
Let $K$ be an arbitrary compact subset of $S_b$ that contains $w$. Let $\delta>0$ be small so that the closed disk $\overline D(\infty, \delta)$ in $\oC$ centered at $\infty$ of radius $\delta$ does not contain either $1$ or $b$.  We choose the extended real line to be a base curve $\beta$. 
 For singular values $1, b, a_n,\infty$ of $f_{a_n,b}$,  the graph $G_\beta$ defined in Subsection~\ref{SS:SSS} is embedded in $\oC$, has two vertices $\times$ and $\circ$ and four edges connecting them, each crossing the base curve $\beta$ between two adjacent singular values. 
We can further choose $\delta$ such that $\overline D(\infty, \delta)$ does not intersect $G_\beta$.
The full preimage $U_\delta=f_b^{-1}(D(\infty, \delta))$ then consists of infinitely many connected components $U_{k}^{\infty}(\delta),\ k\in\N$, that are open topological half-planes. 
Finally, we may choose $\delta>0$ even smaller so that none of the components $U_{k}^{\infty}(\delta),\ k\in\N$, intersects the compact $K$. 
Indeed, the family of open sets 
$$
V_{b, \delta}=f_b^{-1}\left(\oC\setminus\overline D(\infty, \delta)\right),\quad \delta>0, 
$$
forms an open cover of $S_b$ and hence of $K$.
If small $\delta>0$ is chosen such that the above conditions are satisfied, choosing $N$ such that $a_n\in B(\infty,\delta),\ n\ge N$, works. To see this, we just observe that for $n\ge N$, the isometric embedding of Speiser graphs above induces an embedding $i_n$ of  $V_{b,\delta}$ into $S_{a_n,b}$ such that  $f_{a_n,b}\circ i_n=f_b$. 

Since the above holds for any sequence $(S_{a_n,b}, f_{a_n,b}, w_n),\ a_n\to+\infty$, and the maximality of $(S_b, f_b, w)$ is trivial, the first part of the theorem follows.

The convergence of $(S_{b_n}, f_{b_n}, w_n)$ to $(S, f, w)$ follows the same lines.
\end{proof}

\section{Changing the order}\label{S:CO}

\noindent
In this section we show, in addition, that convergence in the sense of Carath\'eodory can change the order of entire functions.
\begin{theorem}\label{T:TC}
There exists a sequence of normalized triples $(\C, f_n, 0)$, $n\in\N$, where each $f_n$ is an entire function of infinite order, that converges in the sense of Carath\'eodory to a normalized triple $(\C, f, 0)$, where $f$ has order 1.
\end{theorem}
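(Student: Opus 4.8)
The plan is to exhibit an explicit family of entire functions and then appeal to Theorem~\ref{T:CC}. I would take $f_n(z)=n\bigl(e^{e^z/n}-1\bigr)$, $n\in\N$. Writing $f_n=\bigl(n(e^{(\cdot)/n}-1)\bigr)\circ\exp$ makes the idea transparent: the outer factor is an order-one entire function that degenerates to the identity, while the inner $\exp$ is fixed, so $f_n$ is ``double--exponential-like'' for every $n$ but its limit is the single exponential. The Taylor estimate $\bigl|n(e^{w/n}-1)-w\bigr|=O\!\bigl(|w|^2/n\bigr)$ for $|w|\le n$, applied with $w=e^z$ and $z$ in a fixed compact set, shows that $f_n\to e^z$ uniformly on compact subsets of $\C$; the limit $e^z$ has order $1$. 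I would first verify that each $f_n$ has infinite order, then upgrade uniform convergence to convergence in the sense of Carath\'eodory.

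That each $f_n$ has infinite order is an elementary estimate. Since $\max_{|z|=r}\Re(e^z)=e^r$, one has $n(e^{e^r/n}-1)=|f_n(r)|\le M(r,f_n)\le n(e^{e^r/n}+1)$, hence $\log M(r,f_n)=e^r/n+O(1)$ and $\log\log M(r,f_n)=r-\log n+o(1)$ as $r\to\infty$; therefore the order of $f_n$ equals $\limsup_{r\to\infty}(r-\log n)/\log r=+\infty$. (One may also check that $f_n'(z)=e^z e^{e^z/n}$ has no zeros and that the asymptotic values of $f_n$ are exactly $0$, $-n$ and $\infty$, so each $f_n$ belongs to the Speiser class $\mathcal S$, as advertised in the introduction; this is not needed for the theorem itself.)

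To obtain Carath\'eodory convergence I would normalize. Put $\beta_n=\log\!\bigl(n\log(1+1/n)\bigr)$ and $\alpha_n=2\bigl(e^{\beta_n}(1+1/n)\bigr)^{-1}$, and set $\hat f_n(z)=f_n(\alpha_n z+\beta_n)$; a short computation gives $\hat f_n(0)=1$ and $\hat f_n^{\#}(0)=1$, so $(\C,\hat f_n,0)$ is a normalized triple. Since $\beta_n\to 0$ and $\alpha_n\to 2$, the functions $\hat f_n$ converge uniformly on compacta to $\hat f(z)=e^{2z}$, the normalized triple $(\C,e^{2z},0)$; note $\hat f$ has order $1$ and each $\hat f_n$ still has infinite order, as order is unaffected by affine precomposition. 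Now I would apply Theorem~\ref{T:CC} with $R_n=R=+\infty$ (so that $\limsup R_n\le R$ holds automatically) and with empty exceptional set: hypothesis (a) holds because $\hat f_n(0)=\hat f(0)=1$, hypothesis (b) holds because all $R_n$ and $R$ equal $+\infty$, and hypothesis (c)---uniform convergence on compact subsets of $D_R\setminus E=\C$---is exactly what was established. Theorem~\ref{T:CC} then yields that $(\C,\hat f_n,0)$ converges to $(\C,\hat f,0)$ in the sense of Carath\'eodory, which is the assertion.

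I expect the only delicate point to be the interface with Theorem~\ref{T:CC}: one must make sure the renormalized triples are genuinely \emph{normalized} over the \emph{same} marked value and that the constraint $\limsup R_n\le R$ is in force. The cautionary example following Theorem~\ref{T:CC} shows that without such a constraint uniform convergence on compacta need not give Carath\'eodory convergence at all; here, however, $R_n=R=+\infty$, so no obstruction arises, and what remains is the routine Taylor and spherical-derivative bookkeeping above. (Alternatively, one could build $(S_n,f_n,w_n)$ from the ``perturbed double exponential'' surfaces of Figure~\ref{fig:EE} by sending the extra singular values to $\infty$, invoking Lemma~\ref{L:h} and the method of Theorem~\ref{T:PtoHtoP}, but the explicit family above is more economical.)
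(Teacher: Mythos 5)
Your proof is correct, and it reaches the conclusion by a genuinely different route than the paper, even though the underlying family of functions is essentially the same one in disguise: your $f_n(z)=n\bigl(e^{e^z/n}-1\bigr)$ equals $a_n\bigl(e^{e^{z-\ln n}}-1\bigr)$ with $a_n=n$, i.e.\ an affine reparametrization of the paper's $f_n(z)=a_n\bigl(\exp(\exp(z))-1\bigr)+1$, and your base point $\beta_n=\log\bigl(n\log(1+1/n)\bigr)$ is the paper's $w_n=\ln\ln(1+1/a_n)$ shifted by $\ln n$. The difference lies in how Carath\'eodory convergence is established. The paper constructs the surfaces combinatorially from the Speiser graphs of Figure~\ref{fig:EE} with $a=b=a_n$ and invokes the graph-embedding argument of Theorem~\ref{T:PtoHtoP}, only remarking afterwards that the maps $\phi_{K,n}=f_n^{-1}\circ f$ are asymptotic to translations. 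You instead verify locally uniform convergence directly by a Taylor estimate, renormalize so that $\hat f_n(0)=\hat f(0)=1$ and $\hat f_n^{\#}(0)=1$, and then apply the sufficiency half of Theorem~\ref{T:CC} with $R_n=R=+\infty$ and $E=\emptyset$; your order computation via $\log\log M(r,f_n)=r-\log n+o(1)$ is also more explicit than the paper's one-line assertion. What your approach buys is a self-contained analytic verification that bypasses the Speiser-graph machinery entirely and makes the interface with Theorem~\ref{T:CC} transparent (the caveat about $\limsup R_n\le R$ is correctly handled, since everything is parabolic here); what the paper's approach buys is consistency with the geometric framework of Section~\ref{S:TC}, where the same graph-convergence argument also covers the type-changing examples. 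The only detail you leave implicit is the standing hypothesis preceding Definition~\ref{D:Ker} that the $\hat f_n$ be homeomorphisms from neighborhoods of $0$ onto a ball $B(1,r)$ of \emph{uniform} radius $r$; this follows routinely from $\hat f_n'(0)=2$ and the locally uniform convergence, and is no more of a gap than in the paper's own treatment.
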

\begin{proof}
Consider labeled Speiser graphs in Figure~\ref{fig:EE} with $a=b=a_n\to\infty$, and denote the corresponding surfaces by $(S_n, f_n)$. Note that in this case there are only 3 singular values, namely $1, a_n,\infty$, and so some of the double edges in Figure~\ref{fig:EE} are identified to become single edges. Also, let $f(z)=e^z+1$. This is an entire function of order 1 whose Speiser graph is depicted in Figure~\ref{fig:E}.
We choose a ``vertical'' isometric embedding of the Speiser graph in Figure~\ref{fig:E} to that in Figure~\ref{fig:EE}, i.e., an embedding such that one of the complementary components of the embedded graph does not contain any vertices of the graph in Figure~\ref{fig:EE}. Such an embedding is unique up to a vertical translation. The point 0 in $\C$ is on the common boundary of two of the components of the preimages of the upper and lower hemispheres, in this case half-planes, under $f$. Such half-planes correspond to the vertices of the Speiser graph in Figure~\ref{fig:E}. Let $w_n$ be the point in $S_n$ that corresponds to 0 under the embedding of half-planes that corresponds to the embedding of Speiser graphs.   

The surface $(S_n, f_n, w_n)$ is 
equivalent to $(\C, f_n, 0)$, where $$f_n(z)=a_n\left(\exp(\exp(z))-1\right)+1,$$ and so $f_n$ has infinite order. 
Here, $w_n=\ln\ln\left(1+1/a_n\right)$.
Arguing as in Theorem~\ref{T:PtoHtoP}, we conclude that $(\C, f_n,0),\ n\in\N$, converges in the sense of Carath\'eodory to the surface $(\C, f, 0)$ as $a_n\to\infty$.
In this case the  maps $\phi_{K,n}=f_n^{-1}\circ f$ are asymptotic to translations $z\mapsto z+w_n$. The theorem follows.

Note that, due to~\cite{Bi15}, we could not argue that the orders of functions in the sequence are infinite based on the fact that the corresponding surfaces could be obtained  by quasiconformal deformations from the surface of the double exponential function.
\end{proof}


\end{document}